\begin{document}

\sloppy
\newtheorem{Def}{Definition}[section]
\newtheorem{Prop}[Def]{Proposition}
\newtheorem{Theo}[Def]{Theorem}
\newtheorem{Lem}[Def]{Lemma}
\newtheorem{Koro}[Def]{Corollary}
\theoremstyle{definition}
\newtheorem{Rem}[Def]{Remark}
\newtheorem{Exam}[Def]{Example}

\newcommand{\add}{{\rm add}}
\newcommand{\gd}{{\rm gl.dim }}
\newcommand{\dm}{{\rm dom.dim }}
\newcommand{\E}{{\rm E}}
\newcommand{\G}{{\rm G}}
\newcommand{\Mor}{{\rm Morph}}
\newcommand{\End}{{\rm End}}
\newcommand{\ind}{{\rm ind}}
\newcommand{\rsd}{{\rm res.dim}}
\newcommand{\rd} {{\rm rep.dim}}
\newcommand{\ol}{\overline}
\newcommand{\rad}{{\rm rad}}
\newcommand{\soc}{{\rm soc}}
\renewcommand{\top}{{\rm top}}
\newcommand{\pd}{{\rm proj.dim}}
\newcommand{\id}{{\rm inj.dim}}
\newcommand{\Fac}{{\rm Fac}}
\newcommand{\fd} {{\rm fin.dim }}
\newcommand{\DTr}{{\rm DTr}}
\newcommand{\cpx}[1]{#1^{\bullet}}
\newcommand{\D}[1]{{\mathscr D}(#1)}
\newcommand{\Dz}[1]{{\mathscr D}^+(#1)}
\newcommand{\Df}[1]{{\mathscr D}^-(#1)}
\newcommand{\Db}[1]{{\mathscr D}^b(#1)}
\newcommand{\C}[1]{{\mathscr C}(#1)}
\newcommand{\Cz}[1]{{\mathscr C}^+(#1)}
\newcommand{\Cf}[1]{{\mathscr C}^-(#1)}
\newcommand{\Cb}[1]{{\mathscr C}^b(#1)}
\newcommand{\K}[1]{{\mathscr K}(#1)}
\newcommand{\Kz}[1]{{\mathscr K}^+(#1)}
\newcommand{\Kf}[1]{{\mathscr  K}^-(#1)}
\newcommand{\Kb}[1]{{\mathscr K}^b(#1)}
%\stackrel{\sim}
\newcommand{\modcat}{\ensuremath{\mbox{{\rm -mod}}}}
\newcommand{\stmodcat}[1]{#1\mbox{{\rm -{\underline{mod}}}}}
\newcommand{\gr}[1]{#1\mbox{{\rm -{grmod}}}}
\newcommand{\pmodcat}[1]{#1\mbox{{\rm -proj}}}
\newcommand{\imodcat}[1]{#1\mbox{{\rm -inj}}}
\newcommand{\opp}{^{\rm op}}
\newcommand{\otimesL}{\otimes^{\rm\bf L}}
\newcommand{\rHom}{{\rm\bf R}{\rm Hom}}
\newcommand{\projdim}{\pd}
\newcommand{\Hom}{{\rm Hom}}
\newcommand{\Coker}{{\rm coker}\,\,}
\newcommand{ \Ker  }{{\rm Ker}\,\,}
\newcommand{ \Img  }{{\rm Im}\,\,}
\newcommand{\Ext}{{\rm Ext}}
\newcommand{\cov}{{\rm cov}}
\newcommand{\StHom}{{\rm \underline{Hom} \, }}

\newcommand{\gm}{{\rm _{\Gamma_M}}}
\newcommand{\gmr}{{\rm _{\Gamma_M^R}}}

\newcommand{\scr}[1]{\mathscr #1}
\newcommand{\al}[1]{\mathcal #1}

\def\vez{\varepsilon}\def\bz{\bigoplus}  \def\sz {\oplus}
\def\epa{\xrightarrow} \def\inja{\hookrightarrow}

\newcommand{\lra}{\longrightarrow}
\newcommand{\lraf}[1]{\stackrel{#1}{\lra}}
\newcommand{\ra}{\rightarrow}
\newcommand{\dk}{{\rm dim_{_{k}}}}

{\Large \bf
\begin{center}
Stable equivalences of Morita type for $\Phi$-Beilinson-Green algebras
\end{center}}
\medskip

\centerline{{\bf Shengyong Pan}}

\begin{center} Department of Mathematics,\\
 Beijing Jiaotong University, \\
100044 Beijing, People's Republic of  China \\
 E-mail:  shypan@bjtu.edu.cn \\
\end{center}
\bigskip

\renewcommand{\thefootnote}{\alph{footnote}}
\setcounter{footnote}{-1} \footnote{ $^*$ Corresponding author.
Email: shypan@bjtu.edu.cn.}
\renewcommand{\thefootnote}{\alph{footnote}}
\setcounter{footnote}{-1} \footnote{2000 Mathematics Subject
Classification: 18E30,16G10;16S10,18G15.}
\renewcommand{\thefootnote}{\alph{footnote}}
\setcounter{footnote}{-1} \footnote{Keywords: Admissible sets,
stable equivalences of Morita type, self-injective algebras, derived equivalences,
Beilinson-Green algebras.}

\date{}

\begin{abstract}
We present a method to construct new stable equivalences of
Morita type. Suppose that  a
$B$-$A$-bimodule $N$ define a stable equivalence of Morita type
between finite dimensional algebras $A$ and $B$. Then, for any generator $X$ of the $A$-module category and any finite admissible set $\Phi$ of natural numbers, the
$\Phi$-Beilinson-Green algebras $\scr G^{\Phi}_A(X)$ and
$\scr G^{\Phi}_B(N\otimes_AX)$ are stably equivalent of Morita type. In
particular, if $\Phi=\{0\}$, we get a known result in literature.
 As another consequence, we
construct an infinite family of derived equivalent algebras of the
same dimension and of the same dominant dimension such that they are
pairwise not stably equivalent of Morita type. Finally, we develop some techniques for proving that, if there is a graded stable equivalence of Morita type between graded algebras, then we can 
get a stable equivalence of Morita type between Beilinson-Green algebras associated with graded algebras.
\end{abstract}

%\tableofcontents

\section{Introduction}

In the representation theory of algebras and groups, there are three fundamental equivalences: Morita, derived and stable equivalences. A cycle of interconnected questions and conjectures known as ``Morita invariances" have inspired a lot of work over the last few decades.
As is known, for the first two, there is a corresponding
Morita theory for each (see \cite{morita, Rickard}), while much less is known for the last. Recently, a
special class of stable equivalences, called stable equivalences of
Morita type, are introduced by Brou\'e \cite{B} in modular representations of
finite groups. They are close to derived equivalences, are induced by bimodules, and are shown to be of great interest in modern
representation theory since they preserve many homological and
structural invariants of algebras and modules (see, for example,
\cite{B, DM, krause, L, P, C3, C2}).  
Although we have a better understanding of stable equivalences of Morita type than on general stable equivalences, 
we still cannot give an answer for Auslander-Reiten conjecture (see, for Auslander-Reiten conjecture, \cite{ARS, LZZ}). 
In order to understand this
kind of equivalences, one has to know, examples and
basic properties of stable equivalences of Morita type as many as
possible. 

\smallskip

Only a few  methods so far using one-point extensions, endomorphism algebras and  Auslander-Yoneda algebras were known in
\cite{LX1, LX2, LX3, CPX}. Rickard's result that the
existence of derived equivalences for self-injective algebras imply
the one of stable equivalences of Morita type provides another way
to construct stable equivalences of Morita type. This method is no longer true for general finite-dimensional algebras
(see \cite{hx3} for some new advances in this direction). Moreover, unlike tensor products and trivial extensions
preserved by derived equivalences \cite{R3}, stable equivalences of Morita type do not preserve tensor products and trivial extensions \cite{LZZ}.
So, a
systematical method for constructing stable equivalences of Morita
type seems not yet to be available.

In this paper, we shall look for a more general method to construct new stable equivalences of
Morita type for general finite-dimensional algebras. The 
procedure has two flexibilities, one is the choice of generators,
and the other is the one of finite admissible sets. Thus this
construction provides a large variety of stable equivalences of
Morita type.

To state our first main result, let us recall the definition of
$\Phi$-Beilinson-Green algebras. Let $A$ be a
finite-dimensional algebra and $X$ an $A$-module. Then, for an
admissible set $\Phi$ of natural numbers, there is defined an
algebra $\scr G^{\Phi}_A(X)$, called the $\Phi$-Beilinson-Green algebra
of $X$ in \cite{Pan, PZ}, which is equal to $\bigoplus_{i,j\in
\Phi}\mbox{Ext}^{j-i}_A(X,X)$ as a vector space, and its multiplication
is defined in a natural way (see Subsection \ref{adm} for details).
Our main result reads as follows:

\begin{Theo}\label{thm1}
Let $A$ and $B$ be finite-dimensional $k$-algebras over a field $k$.
Assume that two bimodules $_{A}M _{B}$ and $_{B}N_{A}$ define a
stable equivalence of Morita type between $A$ and $B$. Let $X$ be an
$A$-module which is a generator for $A$-module category. Then, for
any finite admissible set $\Phi$ of natural numbers, there is a
stable equivalence of Morita type between $\scr G^{\Phi}_{A}(X)$ and
$\scr G^{\Phi}_{B}(N\otimes_{A}X)$. 
\end{Theo}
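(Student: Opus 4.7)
The plan is to construct explicit bimodules defining a stable equivalence of Morita type between $\Lambda := \scr G^{\Phi}_{A}(X)$ and $\Gamma := \scr G^{\Phi}_{B}(Y)$, with $Y := N\otimes_{A}X$, by routing through an auxiliary algebra attached to $\tilde M := M\otimes_{B}Y$. First I would check that $Y$ is again a generator of $B\modcat$: since $X$ is a generator, $_{A}A\in\add(X)$, and $_{A}M$ is projective, so $M\in\add(_{A}A)\subseteq\add(X)$; hence $N\otimes_{A}M\in\add(Y)$, but $N\otimes_{A}M\cong B\oplus Q$, whence $_{B}B\in\add(Y)$. The structural identity driving the argument comes from $M\otimes_{B}N\cong A\oplus P$, yielding
$$\tilde M \;=\; M\otimes_{B}N\otimes_{A}X \;\cong\; X\oplus P_{X},\qquad P_{X}:=P\otimes_{A}X,$$
in $A\modcat$, where $P_{X}$ is a projective left $A$-module (the bimodule $P$ is a summand of $(A\otimes_{k}A)^{\ell}$, so $P_{X}$ is a summand of the left $A$-free module $A\otimes_{k}X^{\ell}$).

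Next, set $\tilde\Lambda := \scr G^{\Phi}_{A}(\tilde M)$. Because $M$ is right $B$-projective, $M\otimes_{B}-$ is exact, and so induces a $k$-algebra homomorphism $\phi\colon\Gamma\to\tilde\Lambda$. I would define
$$\bar M \;:=\; \bigoplus_{i,j\in\Phi}\Ext^{j-i}_{A}(\tilde M, X),\qquad \bar N \;:=\; \bigoplus_{i,j\in\Phi}\Ext^{j-i}_{A}(X, \tilde M),$$
which are naturally $\Lambda$-$\tilde\Lambda$- and $\tilde\Lambda$-$\Lambda$-bimodules under Yoneda composition; restriction along $\phi$ turns $\bar M$ into a $\Lambda$-$\Gamma$-bimodule and $\bar N$ into a $\Gamma$-$\Lambda$-bimodule. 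I would then verify the four defining axioms of a stable equivalence of Morita type. Two-sided projectivity of $\bar M$ and $\bar N$ reduces to classical Morita theory inside $\add(_{A}X)$, because the $\Ext^{>0}$-components vanish on the projective summand $P_{X}$, so the problem collapses to the corresponding statement for ordinary $\Hom_{A}(-,-)$ over endomorphism algebras. For the composition $\bar M\otimes_{\tilde\Lambda}\bar N\to\Lambda$, the splitting $X\hookrightarrow\tilde M\twoheadrightarrow X$ makes it split surjective, and its complement, traceable to the summand $P_{X}$, becomes a projective $\Lambda$-$\Lambda$-bimodule $P'$; transferring to $\bar M\otimes_{\Gamma}\bar N$ yields $\Lambda\oplus P'$. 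Dually one analyses $\bar N\otimes_{\Lambda}\bar M$, which via the equality $\add(X)=\add(\tilde M)$ Yoneda-composes to $\tilde\Lambda$, after which base change along $\phi$ must be carried out to obtain $\Gamma\oplus Q'$.

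I expect the main obstacle to be precisely this last step: comparing $\bar N\otimes_{\Lambda}\bar M$, which naturally lands in $\tilde\Lambda$, with $\Gamma$ through the base change along $\phi$, and identifying the discrepancy as a genuine projective $\Gamma$-$\Gamma$-bimodule. The essential input is that $M\otimes_{B}-$ is a stable equivalence, so $\phi$ is an isomorphism on all Yoneda components of positive degree (since $\Ext^{n}_{B}(Y,Y)\cong\Ext^{n}_{A}(X,X)=\Ext^{n}_{A}(\tilde M,\tilde M)$ for $n\geq 1$) and is a ``stable isomorphism'' in degree $0$, the discrepancy $\tilde\Lambda_{0}/\phi(\Gamma_{0})$ factoring through the projective summand $P_{X}$ and hence traceable to the projective bimodules $P,Q$ of the original data. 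For $\Phi=\{0\}$ this collapses to the endomorphism-algebra case from \cite{LX1,CPX}; the new content is to propagate the bimodule splittings compatibly through all the off-diagonal $\Ext$-components of $\Lambda$ and $\Gamma$ and to verify that the projective $\Lambda$- and $\Gamma$-bimodule pieces $P',Q'$ are indeed projective as graded bimodules over the Beilinson-Green algebras.
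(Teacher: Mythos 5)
Your setup overlaps with the paper's up to a point: your $\bar N=\bigoplus_{i,j\in\Phi}\Ext^{j-i}_A(X,\tilde M)=\scr G^{\Phi}_A(X,M\otimes_B N\otimes_A X)$, with $\Gamma$ acting through the map induced by $M\otimes_B-$, is exactly the bimodule $U$ used in the paper, and the observation $\tilde M\cong X\oplus P\otimes_AX\in\add(X)$ is the same driving identity. But your second bimodule is different: the paper takes $V:=\scr G^{\Phi}_B(N\otimes_AX,T_N(X))=\Gamma$ itself, computed over $B$, with right $\Lambda$-action through $N\otimes_A-$, whereas you take $\bar M=\scr G^{\Phi}_A(\tilde M,X)$ computed over $A$ and restrict along $\phi\colon\Gamma\to\tilde\Lambda=\scr G^{\Phi}_A(\tilde M)$. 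This choice creates the two gaps that your sketch does not close. First, one-sided projectivity: $X\in\add(\tilde M)$ gives projectivity of $\bar M$ and $\bar N$ over $\tilde\Lambda$ (and over $\Lambda$), but restriction along $\phi$ does not preserve projectivity, and $\phi$ is neither injective nor surjective in general -- its degree-zero component $\End_B(N\otimes_AX)\to\End_A(X\oplus P\otimes_AX)$ is only an isomorphism "modulo projectives". So "two-sided projectivity reduces to classical Morita theory inside $\add(_AX)$" establishes the wrong statement; projectivity over $\Gamma$ is not addressed. (The paper itself treats the analogous point, right-projectivity of $U$ and $V$, by a separate indirect argument: it first proves both decompositions $U\otimes_\Gamma V\cong\Lambda\oplus(\mathrm{proj})$ and $V\otimes_\Lambda U\cong\Gamma\oplus(\mathrm{proj})$ and only then deduces exactness of the two tensor functors, hence right projectivity.)

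Second, the base-ring mismatch you yourself flag as "the main obstacle" is the actual content of the theorem in your route, and the heuristic offered does not close it. Yoneda composition balances $\bar M\otimes\bar N$ over $\tilde\Lambda$, and since $X\in\add(\tilde M)$ one gets $\bar M\otimes_{\tilde\Lambda}\bar N\cong\Lambda$ with no complement at all; the projective $\Lambda$-$\Lambda$-summand you need must come from the kernel of $\bar M\otimes_{\Gamma}\bar N\twoheadrightarrow\bar M\otimes_{\tilde\Lambda}\bar N$, and nothing in the proposal shows that this kernel is a projective bimodule. Likewise $\bar N\otimes_\Lambda\bar M$ composes naturally into $\tilde\Lambda$, not into $\Gamma$, and the fact that $\phi$ is an isomorphism on positive-degree components and a stable isomorphism in degree $0$ does not by itself produce a bimodule decomposition $\Gamma\oplus Q'$ with $Q'$ projective; "agreeing up to morphisms factoring through projectives" is weaker than a direct-sum splitting of bimodules. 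The paper avoids both problems by its asymmetric choice $V=\Gamma$: then $U\otimes_\Gamma V\cong U\cong\scr G^{\Phi}_A(X,T_MT_N(X))\cong\Lambda\oplus\scr G^{\Phi}_A(X,P\otimes_AX)$ immediately, $V\otimes_\Lambda U\cong\scr G^{\Phi}_B(N\otimes_AX,T_NT_M(N\otimes_AX))\cong\Gamma\oplus\scr G^{\Phi}_B(N\otimes_AX,Q\otimes_B(N\otimes_AX))$ via a natural transformation that is an isomorphism on $\add(X)$, and the remaining work is the (nontrivial, and in the paper explicitly carried out) proof that the complements $\scr G^{\Phi}_A(X,P\otimes_AX)$ and $\scr G^{\Phi}_B(N\otimes_AX,Q\otimes_B(N\otimes_AX))$ are projective bimodules, reducing $P$ to $A\otimes_kA$ and using Lemma \ref{lem3}. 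To salvage your version you would either have to prove the two missing decompositions over $\Gamma$ directly, or replace $\bar M$ by $\Gamma$ as in the paper, at which point the arguments coincide.
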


\begin{Rem} $(1)$ Note that if $\Phi=\{0\}$, then the above result was known in
\cite{CPX,LX3}. Thus Theorem \ref{thm1} generalizes the main result in
\cite{LX3}, and provides much more possibilities through $\Phi$ for
constructing stable equivalences of Morita type. Also, our proof of
Theorem \ref{thm1} is different from that in \cite{CPX,LX3}.

$(2)$ Note that  $\Phi$-Beilinson-Green algebra is a C-construction 
which is different from $\Phi$-Auslander-Yoneda algebra, however, $\Phi$-Auslander-Yoneda algebra is B-construction in the sense of Mori \cite{Mori}.
\end{Rem}

We have the
following characterization of stable equivalences of Morita type for self-injective
algebras from Theorem \ref{thm1} and \cite[Theorem 2]{CPX}.

\begin {Koro}\label{koro1} Suppose that $A$ and $B$ are finite-dimensional
self-injective $k$-algebras over a field $k$ such that neither $A$ nor $B$ has
semisimple direct summands. Let $X$ be an
$A$-module and let $Y$ be a $B$-module. If there is a finite
admissible set $\Phi$ of natural numbers such that
$\scr G^{\Phi}_A(A\oplus X)$ and $\scr G^{\Phi}_B(B\oplus Y)$ are stably
equivalent of Morita type, then, for \emph{any} finite admissible
set $\Psi$ of natural numbers, the algebras $\scr G^{\Psi}_A(A\oplus X)$
and $\scr G^{\Psi}_B(B\oplus Y)$ are stably equivalent of Morita type.
\end{Koro}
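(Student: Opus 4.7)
The plan is to reduce the statement to Theorem \ref{thm1} by first using \cite[Theorem 2]{CPX} to recover a stable equivalence of Morita type between $A$ and $B$ themselves from the given stable equivalence at the Beilinson--Green level, and then pushing this equivalence forward for the new admissible set $\Psi$.

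First I would invoke \cite[Theorem 2]{CPX}, which in the self-injective setting without semisimple summands guarantees that a stable equivalence of Morita type between $\scr G^{\Phi}_A(A\oplus X)$ and $\scr G^{\Phi}_B(B\oplus Y)$ descends to a stable equivalence of Morita type between $A$ and $B$, given by bimodules $_{A}M_{B}$ and $_{B}N_{A}$, and moreover that one can arrange $N\otimes_A X$ to be isomorphic to $Y$ in the stable module category $\stmodcat{B}$. This descent step is the heart of the argument; the self-injectivity and absence of semisimple summands are precisely what make it available.

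Next, since $A$ is self-injective, the functor $N\otimes_A -$ sends projectives to projectives (a standard property of stable equivalences of Morita type), so $N\otimes_A(A\oplus X)$ decomposes as $N\oplus(N\otimes_A X)$, which differs from $B\oplus Y$ only by projective-injective direct summands. Because $\scr G^{\Psi}_B(Z)$ is unchanged up to Morita equivalence when $Z$ is replaced by any module with the same additive closure, and the regular representations $A$ and $B$ already sit inside $A\oplus X$ and $B\oplus Y$ respectively, one checks that $\scr G^{\Psi}_B(N\otimes_A(A\oplus X))$ and $\scr G^{\Psi}_B(B\oplus Y)$ are Morita equivalent.

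Finally, apply Theorem \ref{thm1} to the bimodules $_{A}M_{B}$, $_{B}N_{A}$ together with the generator $A\oplus X$: for the given finite admissible set $\Psi$ one obtains a stable equivalence of Morita type between $\scr G^{\Psi}_A(A\oplus X)$ and $\scr G^{\Psi}_B(N\otimes_A(A\oplus X))$. Composing with the Morita equivalence of the previous paragraph yields the desired stable equivalence of Morita type between $\scr G^{\Psi}_A(A\oplus X)$ and $\scr G^{\Psi}_B(B\oplus Y)$. The main obstacle is the first step: extracting from \cite[Theorem 2]{CPX} the precise identification of $N\otimes_A X$ with $Y$ up to projective summands, and verifying that the resulting projective ambiguity does not alter the Morita type of the Beilinson--Green algebra for the new admissible set $\Psi$.
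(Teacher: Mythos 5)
Your overall strategy coincides with the paper's: descend the given stable equivalence of Morita type from the $\Phi$-Beilinson--Green level to one between $A$ and $B$ that matches the additive closures of the generators, then re-induce via Theorem \ref{thm1} for the new set $\Psi$ and absorb the difference between $N\otimes_A(A\oplus X)$ and $B\oplus Y$ into a Morita equivalence of Beilinson--Green algebras (which, by Lemma \ref{lem1}(1), only needs $\add(N\otimes_A(A\oplus X))=\add(B\oplus Y)$ --- this is also exactly how the paper concludes). The gap is in your first step, where the whole descent is delegated to \cite[Theorem 2]{CPX} as a black box: that theorem does not apply in the form you use it. It is a restriction theorem whose hypotheses concern the algebras $\Lambda=\scr G^{\Phi}_A(A\oplus X)$ and $\Gamma=\scr G^{\Phi}_B(B\oplus Y)$ themselves (not $A$ and $B$), and whose conclusion is a stable equivalence of Morita type between the corner algebras cut out by the projective-injective ($\nu$-stable) parts of $\Lambda$ and $\Gamma$. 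So one must first prove that $\Lambda$ and $\Gamma$ have no semisimple direct summands and that their projective-injective modules are precisely $\add(\scr G^{\Phi}_A(A\oplus X,A))$ resp. $\add(\scr G^{\Phi}_B(B\oplus Y,B))$ --- this is Lemma \ref{lem 2.13} in the paper and uses self-injectivity of $A$ and $B$ --- and then observe that the resulting corner algebras are not $A$ and $B$ but products $A\times\cdots\times A$ and $B\times\cdots\times B$ ($|\Phi|$ copies), so that an additional ingredient, \cite[Theorem 2.2]{L1} or \cite[Lemma 4.1]{CPX} on summands of stable equivalences of Morita type, is needed to extract bimodules $_AM_B$, $_BN_A$ defining a stable equivalence of Morita type between $A$ and $B$ with $\add(M\otimes_B(B\oplus Y))=\add(A\oplus X)$; this is precisely Theorem \ref{coro14} of the paper. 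Once these verifications are supplied, your steps (2) and (3) go through as written, with two small corrections: $N\otimes_A-$ preserves projectives because $N_A$ is projective (not because $A$ is self-injective), and what the descent provides, and all you need, is equality of additive closures rather than a stable isomorphism $N\otimes_AX\simeq Y$, since in general multiplicities of nonprojective summands need not match.
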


As another byproduct of our considerations in this paper, we can
construct a family of derived equivalent algebras with certain
special properties, but derived equivalent algebras are not stably equivalent of Morita type.

\begin{Koro} There is an infinite series of $k$-algebras of the same
dimension over an algebraically closed field such that they have the
same dominant and global dimensions, and are all derived equivalent,
but not pairwise stably equivalent of Morita type. \label{koro2}
\end{Koro}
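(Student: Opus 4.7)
The plan is to fix a self-injective finite-dimensional $k$-algebra $A$ over an algebraically closed field, with no semisimple direct summand, together with an infinite family $\{X_n\}_{n \geq 1}$ of $A$-modules such that the endomorphism algebras $E_n := \End_A(A \oplus X_n)$ are pairwise derived equivalent but pairwise \emph{not} stably equivalent of Morita type, and then to promote this family to the required one via the $\Phi$-Beilinson-Green construction. Such data $(A, \{X_n\})$ can be extracted from known representation-theoretic constructions (for instance from Brauer tree algebras, Nakayama algebras, or the explicit examples of \cite{PZ}), where iterated tilting and mutation at non-projective summands produce derived equivalent endomorphism algebras whose stable Auslander--Reiten type strictly changes, and where the $X_n$ may be arranged to have matching total $k$-dimension.

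Fix any finite admissible set $\Phi$ of natural numbers and set $B_n := \scr G^{\Phi}_A(A \oplus X_n)$. I would then verify three things. First, the derived equivalences between the $E_n$ lift to derived equivalences between the $B_n$: arranging the $X_n$ inside a common additive closure makes the graded Ext algebras $\bigoplus_i \Ext^i_A(A \oplus X_n, A \oplus X_n)$ pairwise isomorphic up to the chosen derived equivalence of $E_n$ with $E_m$, and lifting tilting complexes from $E_n$ to the Beilinson-Green algebra then produces the desired derived equivalence of $B_n$ with $B_m$. Second, the numerical invariants $\dim_k B_n$, $\dm B_n$ and $\gd B_n$ are independent of $n$ by the same uniformity of the underlying Ext data; concretely, taking the $X_n$ to be successive $\Omega$-translates of a single periodic module is a clean way to force these coincidences.

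The main obstacle, pairwise failure of stable equivalence of Morita type, is settled directly by Corollary \ref{koro1}. Suppose for contradiction that $B_n$ and $B_m$ are stably equivalent of Morita type for some $n \neq m$. Since $A$ has no semisimple summand and $A \oplus X_n$, $A \oplus X_m$ are generators containing $A$, Corollary \ref{koro1} applied with $\Psi = \{0\}$ forces $\scr G^{\{0\}}_A(A \oplus X_n) = E_n$ and $\scr G^{\{0\}}_A(A \oplus X_m) = E_m$ to be stably equivalent of Morita type, contradicting the choice of the $X_n$. Thus the $B_n$ form the required infinite family, and the main remaining task — the explicit representation-theoretic construction of the family $\{X_n\}$ so that all the derived equivalences and dimension identities hold simultaneously — is the step most likely to demand care, although it lies within reach of the standard tilting and mutation toolbox applied to familiar self-injective algebras.
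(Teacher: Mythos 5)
Your reduction of the non-stable-equivalence to the $\Phi=\{0\}$ level via Corollary \ref{koro1} is sound as far as it goes, and it is close in spirit to what the paper does (the paper runs the same kind of argument through Theorem \ref{coro14} and Corollary \ref{coro15}, using a syzygy-shift contradiction rather than assuming the non-equivalence of the $E_n$ as input). But the proposal has a genuine gap: everything is made to rest on the existence of a self-injective $A$ and an infinite family $\{X_n\}$ with all the stated properties, and the sources you point to cannot supply it. Brauer tree algebras and Nakayama algebras are representation-finite, so up to additive closure they admit only finitely many basic generators and hence no infinite family at all; and ``successive $\Omega$-translates of a single periodic module'' give only finitely many modules up to isomorphism. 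What is actually needed is a self-injective algebra with a module whose $\Omega$-orbit is infinite, non-periodic, and of constant dimension -- this is exactly the Liu--Schulz algebra (with parameter $q$ not a root of unity) that the paper uses, and such behaviour is special: it is not available from the ``familiar'' self-injective algebras you invoke. Moreover, establishing that the resulting endomorphism algebras $E_n$ are pairwise \emph{not} stably equivalent of Morita type is itself the hard content (in the paper it comes from Corollary \ref{coro15}, whose proof matches non-projective summands up to projectives and exploits that $\Omega_A(I_j)\simeq I_{j+1}$ with the $I_j$ pairwise non-isomorphic); your proposal simply assumes it as part of ``the choice of the $X_n$.''

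The second gap is the claim that derived equivalences between the $E_n$ ``lift'' to derived equivalences between the $\Phi$-Beilinson--Green algebras $B_n$ by transporting tilting complexes. Derived equivalence does not pass to these graded endomorphism-type algebras in general; the paper has to invoke Lemma \ref{lem5}, which requires explicit vanishing conditions ($\Ext^1$ and, for $\Phi=\{0,1\}$, also $\Ext^2$ between the relevant summands), verified for the Liu--Schulz modules with $m\geq n+4$ via the computations quoted from \cite{CPX}. For the same reason the uniformity of $\dim_k B_n$, $\gd B_n$ and $\dm B_n$ is not a formal consequence of ``uniform Ext data'': the paper computes $\dim_k\Lambda_m^{\{0,1\}}$ explicitly from the Hom and Ext dimensions, gets $\gd=\infty$ from the $\Phi=\{0\}$ case, and gets $\dm=0$ from Lemma \ref{lem 2.13}, which needs the Green algebra to have a nonzero strictly upper-triangular part. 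Note also that the paper only establishes the derived equivalences for $\Phi=\{0\}$ and $\{0,1\}$, whereas you claim them for an arbitrary finite admissible $\Phi$ without justification. As written, the proposal therefore reduces the corollary to an unproven existence statement and to a lifting principle that is false in general.
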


Finally, we develop some techniques for proving that, for finite group $G$, we get stable equivalences of Morita type between Beilinson-Green algebra of $G$-graded algebra from $G$-graded stable equivalences of Morita type. For a $G$-graded $k$-algebra $A=\oplus_{g\in G}A_g$, there is a Beilinson-Green algebra $\bar{A}$ defined as a $G\times G$-matrix algebra with $(\bar{A}_{gh})_{g,h\in G}$, where 
$\bar{A}_{gh}=A_{gh^{-1}}$.

\begin{Theo} \label{5} Suppose that there is a $G$-graded stable equivalence of Morita type between $G$-graded algebras $A$ and $B$, then there is a stable equivalence of Morita type between the Beilinson-Green algebras $\bar{A}$ and $\bar{B}$ of $A$ and $B$, respectively.
\end{Theo}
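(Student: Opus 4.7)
The plan is to transport the graded bimodules defining the graded stable equivalence of Morita type into ordinary bimodules over the Beilinson--Green algebras $\bar A$ and $\bar B$, and then to verify the defining isomorphisms entry-by-entry.

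Given any $G$-graded $A$-$B$-bimodule $L=\bigoplus_{g\in G}L_g$, define $\bar L$ to be the $G\times G$-indexed bimodule with $(g,h)$-entry $\bar L_{g,h}:=L_{gh^{-1}}$. By the graded multiplicativity, the natural matrix-style actions
\[
\bar A_{g,h}\cdot \bar L_{h,k}=A_{gh^{-1}}\cdot L_{hk^{-1}}\subseteq L_{gk^{-1}}=\bar L_{g,k}
\]
and the symmetric right action of $\bar B$ are well defined, making $\bar L$ into a $\bar A$-$\bar B$-bimodule. Apply this construction to $M,N$ and to the graded projective bimodules $P,Q$ arising from the assumed isomorphisms $M\otimes_BN\cong A\oplus P$ and $N\otimes_AM\cong B\oplus Q$, obtaining $\bar M,\bar N,\bar P,\bar Q$. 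A key preliminary observation is that a graded shift $A(g)$ (a summand of the graded regular module) corresponds under this assignment to $\bar A e_g$, a projective $\bar A$-module, where $e_g$ is the diagonal idempotent at $(g,g)$. Consequently, graded-projectivity on either side is sent to ordinary projectivity, so $\bar M$ and $\bar N$ are projective on both sides and $\bar P$, $\bar Q$ are projective as bimodules.

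The main step is the tensor-product identification
\[
\bar M\otimes_{\bar B}\bar N\;\cong\;\overline{M\otimes_BN}
\]
as $\bar A$-$\bar A$-bimodules. Using the diagonal idempotents $e_g$ of $\bar A$ and $\bar B$, one decomposes
\[
e_g\bigl(\bar M\otimes_{\bar B}\bar N\bigr)e_k\;=\;\bigoplus_{h\in G}\bigl(M_{gh^{-1}}\otimes_{B_{1}}N_{hk^{-1}}\bigr)\big/\sim,
\]
where $B_1$ is the identity-degree component of $B$ and $\sim$ records the $\bar B$-relations coming from off-diagonal entries $\bar B_{h,h'}=B_{h(h')^{-1}}$. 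Each such relation is precisely the $B$-bimodule relation $mb\otimes n=m\otimes bn$ for $b\in B_{h(h')^{-1}}$ and identifies elements of the $h$-summand with elements of the $h'$-summand. Reindexing $h=x^{-1}g$ in the graded description of $(M\otimes_BN)_{gk^{-1}}$ then matches this quotient term-by-term, producing the claimed isomorphism of bimodules.

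With the tensor identification and projectivity preservation in hand, the graded isomorphisms $M\otimes_BN\cong A\oplus P$ and $N\otimes_AM\cong B\oplus Q$ transport to $\bar M\otimes_{\bar B}\bar N\cong\bar A\oplus\bar P$ and $\bar N\otimes_{\bar A}\bar M\cong\bar B\oplus\bar Q$, so $(\bar M,\bar N)$ witnesses the desired stable equivalence of Morita type between $\bar A$ and $\bar B$. The main obstacle I anticipate is the tensor-product step: tracking exactly how off-diagonal $\bar B$-relations identify elements in different $h$-summands and matching this bookkeeping with the $B$-relations in the grading of $M\otimes_BN$, so that the natural matrix-style map is both well defined and bijective. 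Once this is settled, the rest reduces to routine bookkeeping at the level of matrix entries.
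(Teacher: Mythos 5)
Your proposal is correct and follows essentially the same route as the paper: form the $G\times G$ matrix bimodules $\bar M,\bar N,\bar P,\bar Q$, transfer one-sided and bimodule graded projectivity to ordinary projectivity over $\bar A,\bar B$, and identify $\bar M\otimes_{\bar B}\bar N$ with $\overline{M\otimes_BN}\cong\bar A\oplus\bar P$ (and symmetrically) by the entrywise computation $\bigl(\bar M\otimes_{\bar B}\bar N\bigr)_{a,b}\cong\bigoplus_{h}M_{ah^{-1}}\otimes N_{hb^{-1}}$ modulo the $\bar B$-relations. The tensor-product bookkeeping you flag as the main obstacle is exactly the step the paper asserts without further detail, and your description of how the off-diagonal relations reproduce the graded $B$-balancing relations is the right justification for it.
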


We mention that the proof of Theorem \ref{5} is somewhat routine. However, the assertions seem to be quite surprising.

The  paper is structured as follows. In Section
\ref{pre}, we shall fix notations and prepare some basic facts for
our proofs. In Section \ref{induction}, we shall prove our main results, Theorem
\ref{thm1}. In Section \ref{selfinjective}, we
will concentrate our consideration on self-injective algebras, and
establish some applications of our main results. In particular, in
this section we shall prove Corollary \ref{koro1} and supply a
sufficient condition to verify when two algebras are not stably
equivalent of Morita type to another algebra, which will be used in
Section \ref{family}. In Section \ref{family}, we shall apply our
results in the previous sections to Liu-Schulz algebras and give a
proof of Corollary \ref{koro2} which answers a question by Thorsten
Holm. In Section \ref{BG}, for finite group, we can construct stable equivalences of Morita type between Beilinson-Green algebra of 
group graded algebra from group graded stable equivalences of Morita type. We will give an example to illustrate our main result in Section \ref{Example}.

{\bf Acknowledgements.} Shengyong Pan is funded by China Scholarship Council. He thanks Professors Hongxing Chen and Wei Hu for useful discussions and suggestions, and also thanks his wife Ran Wen, his sons Ruipeng and Ruiqi for their encouragements. This paper is done during a visit of Shengyong Pan to the University of Edinburgh, he would like to thank Professor Susan J. Sierra for her hospitality and useful discussions.

\section{Preliminaries\label{pre}}
In this section, we fix some notations, and recall some
definitions and basic results which are needed in the proofs of our
main results.

\subsection{Some conventions and homological facts}
Throughout this paper, $k$ stands for a fixed field. All categories
and functors will be $k$-categories and $k$-functors, respectively.
Unless stated otherwise, all algebras considered are
finite-dimensional $k$-algebras, and all modules are finitely
generated left modules.

Let $\mathcal C$ be a category. Given two morphisms $f: X\to Y$ and
$g: Y\to Z$ in $\mathcal C$, we denote the composition of $f$ and
$g$ by $fg$ which is a morphism from $X$ to $Z$, while we denote the
composition of a functor $F:\mathcal {C}\to \mathcal{D}$ between
categories $\mathcal C$ and $\mathcal D$ with a functor $G:
\mathcal{D}\ra \mathcal{E}$ between categories $\mathcal D$ and
$\mathcal E$ by $GF$ which is a functor from $\mathcal C$ to
$\mathcal E$.

If $\cal C$ is an additive category and $X$ is an object in
$\mathcal{C}$, we denote by $\add(X)$ the full subcategory of
$\mathcal{C}$ consisting of all direct summands of finite direct
sums of copies of $X$. The object $X$ is called an additive
generator for $\mathcal C$ if add($X$) = $\mathcal C$.

Let $A$ be an algebra. We denote by $A\modcat$ the category of all
$A$-modules, by $A\pmodcat$ (respectively, $A\imodcat$) the full
subcategory of $A\modcat$ consisting of projective (respectively,
injective) modules,  by $D$ the usual $k$-duality $\Hom_{k}(-, k)$,
and by $\nu_{A}$ the Nakayama functor $D\Hom_{A}(-,\,_{A}A)$ of $A$.
Note that $\nu_A$ is an equivalence from $A\pmodcat$ to $A\imodcat$
with the inverse $\Hom_A(D(A),-)$. We denote the global and dominant
dimensions of $A$ by gl.dim$(A)$ and dom.dim$(A)$, respectively.

As usual, by $\Db{A}$ we denote the bounded derived category of
complexes over $A$-mod. It is known that $A$-mod is fully embedded
in $\Db{A}$ and that $\Hom_{\Db{A}}(X,Y[i])\simeq \Ext^i_A(X,Y)$ for
all $i\ge 0$ and $A$-modules $X$ and $Y$.
Let $X$ be an $A$-module. We denote by $\Omega^i_A(X)$ the $i$-th
syzygy, by $\soc(X)$ the socle, and by rad$(X)$ the Jacobson radical
of $X$.

Let $X$ be an additive generator for $A$-mod. The endomorphism
algebra of $X$ is called the Auslander algebra of $A$. This algebra
is, up to Morita equivalence, uniquely determined by $A$. Note that
Auslander algebras can be described by two homological properties:
An algebra $A$ is an Auslander algebra if gl.dim($A$)$\leq 2\le
\dm(A)$.

An $A$-module $X$ is called a generator for $A$-mod if
$\add(_AA)\subseteq \add(X)$; a cogenerator for $A$-mod if
$\add(D(A_A))\subseteq \add(X)$, and a generator-cogenerator if it
is both a generator and a cogenerator for $A$-mod. Clearly, an
additive generator for $A$-mod is a generator-cogenerator for
$A$-mod. But the converse is not true in general.

\subsection{Admissible sets and perforated covering categories\label{adm}}

In \cite{Pan, PZ}, a class of algebras, called $\Phi$-Green
algebras, were introduced.
Recall that a subset $\Phi$ of $\mathbb N$ is said to be admissible
provided that $0\in \Phi$ and that for any $p,q,r\in \Phi$ with
$p+q+r\in \Phi$ we have $p+q\in \Phi$ if and only if $q+r\in \Phi$.
As shown in \cite{HAU}, there are a lot of admissible subsets of
$\mathbb N$. For example, given any subset $S$ of $\mathbb N$
containing $0$, the set $\{x^m\mid x\in S\}$ is admissible for all
$m\ge 3.$

Assume that  $\Phi$ is an admissible subset of $\mathbb N$.
Let $\mathcal C$ be a $k$-category, and let $F$ be an additive
functor from $\mathcal C$ to itself. The $(F,\Phi)$-covering category
${\mathcal C}_{\cov}^{F,\Phi}$ of $\mathcal C$ is a category in which the
objects are the same as that of $\mathcal C$, and the morphism set
between two objects $X$ and $Y$ is defined to be

$$ \Hom_{{\mathcal C}_{\cov}^{F,\Phi}}(X,Y):=\displaystyle \bigoplus_{i,j\in \Phi} \Hom_{\mathcal C}(F^iX, F^jY),$$
and the composition is defined in an obvious way. Since $\Phi$ is
admissible, ${\mathcal C}_{\cov}^{F,\Phi}$ is an additive $k$-category. In
particular, $\Hom_{{\mathcal C}_{\cov}^{F,\Phi}}(X, X)$ is a $k$-algebra
(which may not be finite-dimensional), and $\Hom_{{\mathcal
C}_{\cov}^{F,\Phi}}(X, Y)$ is an $\Hom_{{\mathcal C}^{F,\Phi}}(X,
X)$-$\Hom_{{\mathcal C}^{F,\Phi}}(Y, Y)$-bimodule. For more details,
we refer the reader to \cite{Pan, PZ}. In this paper, the category
${\mathcal C}_{\cov}^{F,\Phi}$ is simply called a perforated covering
category, and the algebra $\Hom_{{\mathcal C}^{F,\Phi}}(X, X)$ is
called perforated covering algebra of $X$ without mentioning $F$
and $\Phi$.

In case $\mathcal C$ is the bounded derived category $\Db{A}$ with
$A$ a $k$-algebra, and $F$ is the shift functor [1] of $\Db{A}$, we
denote simply by ${\mathcal G}^{\Phi}_A$ the $(F,\Phi)$-covering
category ${\mathcal C}^{F,\Phi}$, by $\scr G_{A}^{\Phi}(X, Y) $ the set
$\Hom_{{\mathcal G}^{\Phi}_A}(X,Y)$, 

$$ \scr G_{A}^{\Phi}(X, Y)=\Hom_{{\mathcal G}_A^{\Phi}}(X,Y):=\displaystyle \bigoplus_{i,j\in \Phi} \Hom_{\Db{A}}(X[i], Y[j])
=\displaystyle \bigoplus_{i,j\in \Phi} \Hom_{\Db{A}}(X, Y[j-i]),$$
Denote by $\scr G_A^{\Phi}(X)$ the
endomorphism algebra $\Hom_{{\mathcal G}^{\Phi}_A}(X, X)$ of $X$ in
${\mathcal G}^{\Phi}_A$. In this case, each element in
$\scr G_A^{\Phi}(X,Y)$ can be written as $(f_{ij})_{i,j\in\Phi}$ with $f_{ij}\in
\Hom_{\Db{A}}(X, Y[j-i])$. The composition of morphisms in${\mathcal
G}^{\Phi}_A$ can be interpreted as follows: for each triple $(X, Y,
Z)$ of objects in $\Db{A}$,

$$\scr G_{A}^{\Phi}(X, Y) \times \scr G_A^{\Phi}(Y, Z) \lra \scr G_{A}^{\Phi}(X,Z)$$
$$\big((f_{ij})_{u\in \Phi}, (g_{kl})_{v\in\Phi}\big) \mapsto (h_{st})_{s,t\in \Phi},$$
where
$$ h_{st}:=\left\{
\begin{array}{cc}
0& if\ \  0\leq s<t, \\
\sum_{\substack{k-t\in\Phi\\ l-k\in\Phi}}f_{lk}F^{l-k}g_{kt}
&if \ 0\leq t\leq s .
\end{array}
\right.$$
for $s,t\in\Phi$. Clearly, if $\Phi$ is finite, then
$\scr G_A^{\Phi}(X,Y)$ is finite-dimensional for all $X,Y\in A\modcat$.

Let $\Phi$ be a finite admissible subset of $\mathbb{N}$ and let $m=max\{i |i\in\Phi\}$.
Thus we get $m\geq0$. Then the algebras $\scr G_A^{\Phi}
(X)$ for an object $X$ in $\Db{A}$, and mention some basic properties of these algebras.
Firstly, let us define an $R$-module $\scr G_A^{\Phi}
(X)$ as follows:
$$
\scr G_A^{\Phi}
(X)=
\begin{pmatrix}
 \E(X)_{0,0}   & \cdots & \cdots &\E(X)_{0,m-1} & \E(X)_{0,m}\\
 0   &  \E(X)_{1,1}&\cdots & \E(X)_{1,m-1} & \E(X)_{1,m}\\
\vdots & \vdots & \vdots & \vdots & \vdots \\
0& 0 &
\cdots & \E(X)_{m-1,m-1} &\E(X)_{m-1,m}\\
0 &0&\cdots &0 &\E(X)_{m,m}\\
\end{pmatrix}_{\Phi\times\Phi},
$$
where $\E(X)_{i,j}=\Hom_{\Db{A}}(X, X[j-i])$, $s\leq i,j\leq m$. If $j-i\neq \Phi$, then $\E(X)_{i,j}=\Hom_{\Db{A}}(X, X[j-i])=0$. That is, $\scr G_A^{\Phi}(X)=(\E(X)_{i,j})_{s\leq i,j\leq m}$. 

 Recall that  the triangular matrix algebra of this form seems first to appear in the paper \cite{G} by Edward L. Green in 1975. A special case of this kind of algebras appeared in the paper \cite{Bei} by A. A. Beilinson in 1978, where he described the derived category of coherent sheaves over $\mathbb{P}^n$ as the one of this triangular matrix algebra. Perhaps it is more appropriate to name this triangular matrix algebra as the $\Phi$-Beilinson-Green algebra of $X$.

The following lemma tells us that $\scr G_A^{\Phi}(X)$ is an associative ring by the multiplication defined above.

\begin{Lem}\, \label{lem3.1.1}
With the notations above, $\Phi$ is admissible if and only if $\scr G_A^{\Phi}(X)$ is an associative ring.
\end{Lem}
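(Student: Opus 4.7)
The plan is to prove both implications by expanding the triple products $(fg)h$ and $f(gh)$ at each matrix position and identifying admissibility as the combinatorial condition that makes these two expansions agree.

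For the forward direction, I would assume $\Phi$ is admissible, fix three composable morphisms $f,g,h$ in the covering category, and write out $((fg)h)_{s,t}$ and $(f(gh))_{s,t}$ directly from the composition formula. Each expands as a double sum over intermediate indices $k,l\in\Phi$, and after collecting shifts the individual summands normalise to $f_{s,l}\,F^{s-l}(g_{l,k})\,F^{s-k}(h_{k,t})$; these summands agree term by term by the associativity of composition in $\Db{A}$ and the functoriality of the shift $F=[1]$. The real work is in comparing the two index sets: on the $((fg)h)$ side the partial difference $l-k$ must lie in $\Phi$ (so that the inner product $(fg)_{s,l}$ is a legitimate entry of $\scr G_A^{\Phi}$), whereas on the $(f(gh))$ side one instead needs $k-t\in\Phi$ (so that $(gh)_{k,t}$ is legitimate). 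Setting $p=s-l$, $q=l-k$, $r=k-t$, so that $p+q+r=s-t\in\Phi$, the symmetric difference between the two index sets is precisely controlled by whether ``$p+q\in\Phi$'' and ``$q+r\in\Phi$'' can disagree for triples with $p,q,r,p+q+r\in\Phi$. Admissibility is exactly the axiom asserting these two conditions are equivalent, so the two double sums coincide.

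For the reverse direction I would argue by contrapositive. Assuming there exist $p,q,r\in\Phi$ with $p+q+r\in\Phi$, $p+q\in\Phi$ and $q+r\notin\Phi$, I would exhibit an algebra $A$ and an $A$-module $X$ carrying nonzero classes in $\Hom_{\Db{A}}(X,X[p])$, $\Hom_{\Db{A}}(X,X[q])$, $\Hom_{\Db{A}}(X,X[r])$ whose Yoneda products in degrees $p+q$ and $p+q+r$ are nonzero; any non-semisimple algebra of sufficient global dimension supplies such an $X$. I would then place three matrices $f,g,h\in\scr G_A^{\Phi}(X)$ concentrated at positions that realise the degrees $p,q,r$ respectively, and compute both triple products at position $(p+q+r,0)$. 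Exactly one of the two parenthesisations admits the summand corresponding to the intermediate choice with $s-l=p$, $l-k=q$, $k-t=r$: the other excludes it, because $q+r\notin\Phi$ prevents the inner composition $(gh)$ from being a valid entry of $\scr G_A^{\Phi}$. The resulting associator is nonzero, contradicting associativity.

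The main obstacle will be the combinatorial bookkeeping in the forward direction: one must verify that admissibility is exactly — and not merely sufficient for — the equivalence of the two intermediate index sets arising from the two parenthesisations, which requires tracking carefully which partial differences are required to lie in $\Phi$ at each stage. Once that matching is pinned down, both directions of the lemma follow from the calculations sketched above.
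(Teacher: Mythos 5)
Your forward direction is, in substance, the paper's entire proof: the paper disposes of the lemma in one sentence (``it follows from the fact that $\Phi$ is admissible and the multiplication is defined by compositions''), and your expansion of $((fg)h)_{s,t}$ versus $(f(gh))_{s,t}$, with the substitution $p=s-l$, $q=l-k$, $r=k-t$ identifying admissibility as exactly the condition making the two intermediate index sets agree, is the correct and properly detailed version of that assertion. For this half your proposal is fine and considerably more explicit than what the paper offers.

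The gap is in your reverse direction. The lemma is stated ``with the notations above'', i.e.\ for a \emph{fixed} algebra $A$ and a fixed object $X$, and for fixed data the converse is simply false: take $X={}_AA$ (or any $X$ with $\Ext^i_A(X,X)=0$ for all $i>0$); then every off-diagonal block $\Hom_{\Db{A}}(X,X[j-i])$ with $j-i\neq 0$ vanishes, $\scr G_A^{\Phi}(X)$ is a product of copies of $\End_A(X)$, and it is associative no matter what $\Phi$ is. Your plan to ``exhibit an algebra $A$ and an $A$-module $X$'' therefore proves a different statement --- that $\Phi$ is admissible if $\scr G_A^{\Phi}(X)$ is associative for \emph{all} $A$ and $X$ --- not the lemma as written; no argument can prove the literal fixed-$X$ converse. (The paper's own proof silently ignores this direction as well.) Separately, even for the quantified version your existence claim is too quick: ``any non-semisimple algebra of sufficient global dimension'' does not guarantee nonvanishing Yoneda products in prescribed degrees $p$, $q$, $r$ with nonzero composites in degrees $p+q$ and $p+q+r$, since such products can vanish even when the individual $\Ext$-groups are nonzero. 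A concrete witness is needed, e.g.\ $A=k[x]/(x^2)$ and $X=k$, whose Yoneda algebra $\bigoplus_{i\ge 0}\Ext^i_A(k,k)$ is a polynomial ring in one variable, so all the required composites are nonzero; with that choice your associator computation at position $(p+q+r,0)$ does go through.
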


{\bf Proof}. It follows from that $\Phi$ is admissible and the multiplication of $\scr G_A^{\Phi}(X)$ is defined by compositions.  $\square$ 

The following fact of the $\Phi$-Green algebra $\scr G_A^{\Phi}(X)$ is useful, which can be easily check.
Let
$$e_i=
\begin{pmatrix}
0 & 0 & 0& \cdots & 0 & 0\\
0 & 0 & 0& \cdots & 0 & 0\\
\vdots & \vdots & \vdots & 1_{\End_{\Db{A}}(X)} & \vdots & \vdots\\
0   & 0   &  0  & \cdots & 0     & 0\\
0   & 0   &  0  & \cdots & 0     &0\\
\end{pmatrix}.$$ Therefore, $1_{\scr G_A^{\Phi}(X)}=\Sigma^m_{i=s}{e_i}$ is the identity
element of $\scr G_A^{\Phi}(X)$ and as a left $\scr G_A^{\Phi}(X)$-module, $\scr G_A^{\Phi}(X)e_i\cong \E^{\Phi,i}(X),$ where $\E^{\Phi,i}(X)=\oplus_{j=s}^m \E^{i-j,\Phi}_{\Db{A}}(X)$.
Then $\scr G_A^{\Phi}(X)\cong \oplus_{i=s}^m \E^{\Phi,i}
(X)$ as left $\scr G_A^{\Phi}(X)$-modules.

Now, let us state some elementary properties of the Hom-functor
$\scr G^{\Phi}_A(X,-)$.

\begin{Lem}  Suppose that $A$ is an algebra,  that $X$ is
an $A$-module, and that $\Phi$ is a finite admissible subset of
$\mathbb{N}$.

$(1)$ Let $\add_A^{\Phi}(X)$ stand for  the full subcategory of
$\scr G_A^{\Phi}$ consisting of objects in $\add(_AX)$. Then
the Hom-functor \,$\scr G_{A}^{\Phi}(X, -) : \add_A^{\Phi}(X)\lra
\scr G_{A}^{\Phi}(X)\pmodcat $ \,is an equivalence of categories;

$(2)$ Let $B$ be a $k$-algebra, and let $P$ be a $B$-$A$-bimodule
such that $P_A$ is projective. Then there is a canonical algebra
homomorphism $\alpha_P: \scr G^{\Phi}_A(X)\lra \scr G^{\Phi}_B(P\otimes_AX)$
defined by $(f_{ij})_{i,j\in \Phi}\mapsto (P\otimes_Af_{ij})_{i,j\in \Phi}$
for $(f_{ij})_{i,j\in \Phi}\in \scr G^{\Phi}_A(X)$. Thus every left (or
right) $\scr G^{\Phi}_B(P\otimes_AX)$-module can be regarded as a left
(or right) $\scr G^{\Phi}_A(X)$-module via $\alpha_P$. \label{lem1}
\end{Lem}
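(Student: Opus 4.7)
The plan for part (1) is to mimic the classical equivalence between $\add(_AX)$ and $\End_A(X)\pmodcat$ induced by the Hom-functor, but carried out inside the perforated covering category $\scr G_A^{\Phi}$. First I would observe that, by the very definition of the Hom-spaces in $\scr G_A^{\Phi}$, one has $\scr G_A^{\Phi}(X,X)=\scr G_A^{\Phi}(X)$ as $k$-algebras, and the functor $\scr G_A^{\Phi}(X,-)$ sends $X$ to the regular left module $_{\scr G_A^{\Phi}(X)}\scr G_A^{\Phi}(X)$. Since the functor is additive and $\add_A^{\Phi}(X)$ is closed under finite direct sums and direct summands, this restricts to a functor into $\scr G_A^{\Phi}(X)\pmodcat$. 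Fullness and faithfulness are tautological on $X$ (and on powers of $X$ by additivity), so they hold on $\add_A^{\Phi}(X)$. Essential surjectivity reduces to the statement that every finitely generated projective $\scr G_A^{\Phi}(X)$-module is of the form $e\cdot\scr G_A^{\Phi}(X)^n$ for an idempotent $e$, which under the identification $\scr G_A^{\Phi}(X^n,X^n)=M_n(\scr G_A^{\Phi}(X))$ lifts to an idempotent endomorphism of $X^n$ in $\scr G_A^{\Phi}$ and splits in $\add_A^{\Phi}(X)$ by the Krull--Schmidt property of the underlying additive category.

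For part (2), the key observation is that $P_A$ projective implies that $P\otimes_A-\colon A\modcat\lra B\modcat$ is exact, hence lifts termwise to a triangulated functor $\Db{A}\lra\Db{B}$ that commutes with the shift $[1]$. Thus for any $f\in\Hom_{\Db{A}}(X,X[j-i])$ the morphism $P\otimes_A f$ lies in $\Hom_{\Db{B}}(P\otimes_A X,(P\otimes_A X)[j-i])$, which is precisely the $(i,j)$-slot in $\scr G^{\Phi}_B(P\otimes_A X)$. To check that $\alpha_P$ is multiplicative, one plugs in the explicit composition formula for $\scr G_A^{\Phi}$ recalled in the excerpt, namely $h_{st}=\sum f_{lk}F^{l-k}g_{kt}$, and notes that $P\otimes_A-$ commutes with $F=[1]$ and with composition in $\Db{A}$, so applying it to each factor and then composing yields the same matrix as composing first and applying it afterwards. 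Preservation of the identity is immediate from the matrix description of $1_{\scr G_A^{\Phi}(X)}=\sum e_i$. Once $\alpha_P$ is known to be an algebra homomorphism, the final clause of (2) is a purely formal restriction of scalars.

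The main obstacle will be the careful index-bookkeeping in checking that $\alpha_P$ respects the perforated multiplication: the composition rule is only defined on pairs $(l,k,t)$ with $k-t,l-k\in\Phi$, and one must verify that tensoring with $P$ does not destroy these admissibility constraints, which is automatic since the indices label Hom-spaces rather than the morphisms themselves and $P\otimes_A-$ only operates on the morphism entries. Aside from this, both parts are essentially formal consequences of the definitions and of the standard Yoneda-type equivalence, so no substantive new ideas beyond the setup in Subsection \ref{adm} are required.
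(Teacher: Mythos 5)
Part (2) of your proposal is correct and is essentially what the paper intends by ``we can check it directly'': exactness of $P\otimes_A-$ gives a triangle functor $\Db{A}\to\Db{B}$ commuting with $[1]$, so each entry $f_{ij}\in\Hom_{\Db{A}}(X,X[j-i])$ is sent into the right slot, and multiplicativity and unitality are verified entrywise against the composition formula. No objection there.

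The problem is your essential-surjectivity argument in (1). You reduce to an idempotent $e\in\scr G^{\Phi}_A(X^n,X^n)$ and claim it ``splits in $\add_A^{\Phi}(X)$ by the Krull--Schmidt property of the underlying additive category''. But the objects of $\add_A^{\Phi}(X)$ are ordinary $A$-modules, while its endomorphism rings are the enlarged algebras $\scr G^{\Phi}_A(-)$; Krull--Schmidt for $A\modcat$ only splits idempotents lying in $\End_A(X^n)$, not arbitrary idempotents of $\scr G^{\Phi}_A(X^n)$ (for instance the units $e_i$ concentrated in a single degree). In fact $\add_A^{\Phi}(X)$ is not idempotent complete and the functor is \emph{not} dense as soon as $|\Phi|\geq 2$: for $0\neq Y\in\add({}_AX)$ the left module $\scr G^{\Phi}_A(X,Y)$ splits into its $|\Phi|$ ``columns'' $\bigoplus_{i\in\Phi}\Ext^{j-i}_A(X,Y)$, each containing the nonzero space $\Hom_A(X,Y)$, so no indecomposable projective $\scr G^{\Phi}_A(X)$-module can lie in the essential image. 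Concretely, take $A=k$, $X=k$, $\Phi=\{0,1\}$: then $\scr G^{\Phi}_A(X)\cong k\times k$, every image object has the form $(V,V)$, and the projective module $(k,0)$ is missed. So this step cannot be repaired as written; what your full-faithfulness argument actually yields is that $\scr G^{\Phi}_A(X,-)$ is a fully faithful embedding whose image additively generates $\pmodcat{\scr G^{\Phi}_A(X)}$, i.e.\ an equivalence only after idempotent completion of $\add_A^{\Phi}(X)$ (equivalently, onto $\add$ of the image). That weaker form is all that is used later (projectivity of $\scr G^{\Phi}_A(X,T_M(N\otimes_AX))$, of $\scr G^{\Phi}_A(X,A)$, etc.). For comparison, the paper's own proof of (1) is only a two-line assertion that the verification is easy, so your extra detail is welcome --- but it exposes that the density claim needs to be reformulated rather than proved by idempotent splitting.
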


{\bf Proof}. 
(1) Note that the objects of $\add_A^{\Phi}(X)$ are the same objects as $\add_A(X)$, and $\Hom_{\add_A^{\Phi}(X)}(X, Y)=\scr G_{A}^{\Phi}(X, Y)$.
Then $\scr G_{A}^{\Phi}(X, Y)$ is the Hom-functor $\Hom_{\add_A^{\Phi}(X)}(X, -): \add_A^{\Phi}(X)\lra
\scr G_{A}^{\Phi}(X)\pmodcat $. 
It is known that $$\rad(\scr G_A^{\Phi}(X))=\begin{pmatrix}
\rad(\End_A(X)) &\Ext^1(X,X)&\Ext^2(X,X)&\cdots &\Ext^i(X,X)\\
0&\rad(\End_A(X))&\Ext^1(X,X) &\cdots&\Ext^i(X,X)\\
\vdots&\vdots&\vdots &\vdots&\vdots\\
0&0&0&\cdots&\rad(\End_A(X))
\end{pmatrix},
$$
for some $i\in\Phi$.
Now, it is easy to verify that the functor in (1) is an equivalence of additive categories.

(2) We can check it directly.  $\square$

The following homological result plays an important role in proving
 Theorem \ref{thm1}.

\begin{Lem}
Suppose that  $A, B$ and $C$  are $k$-algebras. Let $_AX$ be an $A$-module, and let $_AY_B$ and ${}_BP_C$ be bimodules with $_BP$
projective. Then, for each $i\geq 0$, we have $\Ext^{i}_{A}(X,
Y\otimes_B P_C)\simeq \Ext^i_A(X, Y)\otimes_B P_C $\, as
$C^{op}$-modules. Moreover, for each  admissible subset $\Phi$ of
$\mathbb{N}$, we have $\scr G^{\Phi}_{A}(X, Y\otimes_B P_C)\simeq
\scr G^{\Phi}_A(X, Y)\otimes_{\scr G_B^{\Phi}(B)}  \begin{pmatrix}
P_C &0&0&\cdots &0\\
0& P_C&0 &\cdots&0\\
\vdots&\vdots&\vdots &\vdots&\vdots\\
0&0&0&\cdots&P_C
\end{pmatrix}$ as
$\scr G^{\Phi}_{A}(X)\mbox{-}\scr G_C^{\Phi}(C)$-bimodules. \label{lem3}
\end{Lem}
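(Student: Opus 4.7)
Part (1) is the underlying homological content; part (2) packages it using the Peirce decomposition of $\scr G^{\Phi}_B(B)$, so I would tackle them in that order.

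For part (1), fix a projective resolution $\cpx Q\lra X$ of $_AX$ with each $Q^n$ finitely generated projective, so that $\Ext^i_A(X,N)\cong H^i(\Hom_A(\cpx Q,N))$ for any $A$-module $N$. Because each $Q^n$ is finitely generated projective, the natural map
$$\Hom_A(Q^n,Y)\otimes_B P\lra \Hom_A(Q^n,Y\otimes_B P),\qquad f\otimes p\mapsto\bigl(q\mapsto f(q)\otimes p\bigr)$$
is an isomorphism of $C^{op}$-modules, natural in $Q^n$, and so assembles into an isomorphism of complexes of $C^{op}$-modules. Since $_BP$ is projective and therefore flat, $-\otimes_B P$ is exact and commutes with cohomology, yielding $\Ext^i_A(X,Y)\otimes_B P\cong \Ext^i_A(X,Y\otimes_B P)$ as $C^{op}$-modules.

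For part (2), I would first observe that because $B$ is projective over itself, $\Ext^n_B(B,B)=0$ for $n>0$ and $\End_B(B)\cong B$; hence $\scr G^{\Phi}_B(B)\cong\prod_{i\in\Phi}B$ is a diagonal matrix algebra with primitive orthogonal idempotents $\{e_i\}_{i\in\Phi}$, and similarly $\scr G^{\Phi}_C(C)\cong\prod_{i\in\Phi}C$. The diagonal bimodule $M$ with $P_C$ on the diagonal satisfies $e_iMe_j=P$ if $i=j$ and vanishes otherwise, so $M=\bigoplus_{i\in\Phi}e_iMe_i$ with each summand a copy of the $B$-$C$-bimodule $P$. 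On the other side, the right $B$-action on $Y$ makes $\scr G^{\Phi}_A(X,Y)$ a right $\scr G^{\Phi}_B(B)$-module whose Peirce components $\scr G^{\Phi}_A(X,Y)\,e_i$ are the ``$i$-th columns'' $\bigoplus_{j\in\Phi}\Ext^{i-j}_A(X,Y)$. Combining,
$$\scr G^{\Phi}_A(X,Y)\otimes_{\scr G^{\Phi}_B(B)}M\;\cong\;\bigoplus_{i\in\Phi}\bigl(\scr G^{\Phi}_A(X,Y)\,e_i\bigr)\otimes_B\bigl(e_iMe_i\bigr)\;\cong\;\bigoplus_{i,j\in\Phi}\Ext^{i-j}_A(X,Y)\otimes_B P,$$
and applying part (1) term by term identifies the right-hand side with $\bigoplus_{i,j\in\Phi}\Ext^{i-j}_A(X,Y\otimes_B P)=\scr G^{\Phi}_A(X,Y\otimes_B P)$.

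The principal obstacle is checking that this vector-space isomorphism respects the full $\scr G^{\Phi}_A(X)$-$\scr G^{\Phi}_C(C)$-bimodule structure. The left $\scr G^{\Phi}_A(X)$-action is handled by the functoriality of the iso in part (1) in its first variable, which translates directly into compatibility with composition in $\Db{A}$. For the right $\scr G^{\Phi}_C(C)\cong \prod_i C$-action, by the Peirce decomposition it suffices to check on each diagonal component that the iso intertwines the right $C$-action on $P$ with the induced right $C$-action on $\Ext^{*}_A(X,Y\otimes_B P)$; this is immediate from the explicit formula $f\otimes p\mapsto(q\mapsto f(q)\otimes p)$. Once the idempotents are aligned in this way, one reads off the desired bimodule isomorphism.
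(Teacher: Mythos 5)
Your proof is correct and follows essentially the same route as the paper: the paper's map $\varphi_{\Phi}$ is precisely the entrywise assembly of the degree-wise isomorphism $\Ext^i_A(X,Y)\otimes_B P\simeq \Ext^i_A(X,Y\otimes_B P)$ that you construct, with the same left/right compatibility checks. The only difference is one of self-containedness: the paper delegates both the degree-wise isomorphism and the bimodule verifications to \cite[Lemma 2.5]{CPX} (phrased there via syzygies $\Omega^{j-i}_A(X)$), whereas you prove them directly from a finitely generated projective resolution of $X$, flatness of $_BP$, and the central-idempotent (Peirce) decomposition of $\scr G^{\Phi}_B(B)\cong \prod_{i\in\Phi}B$.
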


{\bf Proof}. The first part of Lemma \ref{lem3} is proved in \cite[Lemma 2.5]{CPX}.

Second, for each admissible subset $\Phi$ of $\mathbb{N}$, we define
a map 
$$\varphi_{\Phi}: \scr G^{\Phi}_A(X, Y)\otimes_{\scr G_B^{\Phi}(B)} \begin{pmatrix}
P_C &0&0&\cdots &0\\
0& P_C&0 &\cdots&0\\
\vdots&\vdots&\vdots &\vdots&\vdots\\
0&0&0&\cdots&P_C
\end{pmatrix} \to
\scr G^{\Phi}_{A}(X, Y\otimes_B P_C)$$ by $(\ol{f_{ij}})\otimes (p_{jj})\mapsto
(\varphi_{ij}(\ol{f_{ij}}\otimes p_{jj}))
$, where $p_{jj}\in P$, and $f_{ij}\in
\Hom{_A}(\Omega_A^{j-i}(X), Y) $ with $j-i\in \Phi$. From the proof of  first part of \cite[Lemma 2.5]{CPX}, we know that  $\varphi_{\Phi}$ is an isomorphism of
$\scr G_C^{\Phi}(C)^{op}$-modules. It suffices to
show that $\varphi_{\Phi}$ is an isomorphism of left
$\scr G_A^{\Phi}(X)$-modules, this follows from the proof of the second part of \cite[Lemma 2.5]{CPX}.
 $\square$

\section{Stable equivalences of Morita type for $\Phi$-Beilinson-Green algebras \label{induction}}

In this section, we shall prove Theorem \ref{thm1}. First, let us
recall the definition of stable equivalences of Morita type in
\cite{B}.

\begin{Def}
Let $A$ and $B$ be (arbitrary) $k$-algebras. We say that $A$ and $B$
are stably equivalent of Morita type if there is an $A$-$B$-bimodule
$_A M_B$ and a $B$-$A$-bimodule $_B N_ A$ such that

$(1)$ $M$ and $N$ are projective as one-sided modules, and

$(2)$  $M\otimes_{B}N\simeq {A\oplus P}$ as $A$-$A$-bimodules for
some projective $A$-$A$-bimodule $P$, and $N\otimes_{A}M \simeq
{B\oplus Q}$ as $B$-$B$-bimodules for some projective
$B$-$B$-bimodule $Q$. \label{stm}
\end{Def}

In this case, we say that $M$ and $N$ define a stable equivalence of
Morita type between  $A$ and $B$. Moreover, we have two exact
functors $T_N:=N\otimes_A- : A\modcat \to B\modcat$ and $T_M
:=M\otimes_B-: B\modcat \to A\modcat$. Similarly, the bimodules $P$
and $Q$ define two exact functors $T_P$ and $T_Q$, respectively.
Note that the images of $T_P$ and $T_Q$ consist of projective
modules.

Let us remark that if $A$ and $B$ have no separable direct summands,
then we may assume that $M$ and $N$ have no non-zero projective
bimodules as direct summands. In fact, If $M=M'\oplus M''$ and
$N=N'\oplus N''$ such that $M'$ and $N'$ have no non-zero projective
bimodules as direct summands, and that $M''$ and $N''$ are
projective bimodules, then it follows from \cite[Lemma 4.8]{LX2}
that $M'$ and $N'$ also define a stable equivalence of Morita type
between $A$ and $B$.

From now on, we assume that $A, B, M, N, P$ and $Q$ are fixed as in
Definition \ref{stm}, and that $X$ is a generator for $A\modcat$.
Moreover, we fix a finite admissible subset $\Phi$ of $\mathbb{N}$,
and define $\Lambda:=\scr G^{\Phi}_{A}(X)$ and
$\Gamma:=\scr G^{\Phi}_{B}(N\otimes_A X)$.

Since the functors $T_N$ and $T_M$ are exact, they preserve
acyclicity, and can be extended to  triangle functors $T_N': \Db
A\to\Db B$ and $T_M': \Db B\to\Db A$, respectively. Furthermore,
$T'_N$ and $T'_M$ induce canonically two functors $F: {\mathcal G}_A^{\Phi}\to {\mathcal G}_B^{\Phi}$ and $G: {\mathcal
G}_B^{\Phi}\to {\mathcal G}_A^{\Phi}$, respectively. More precisely,
if $\cpx{X}\in \Db{A}$, then $F(\cpx{X}):= ( N\otimes_AX^i,
N\otimes_Ad_X^i)$, and if $f:=(f_{i,j})_{i,j\in \Phi}\in \Hom_{{\mathcal
G}^{\Phi}_A}(\cpx{X},\cpx{Y})$ with $\cpx{Y}\in \Db{A}$, then
$F(f):=(N\otimes_Af_{i,j})_{i,j\in \Phi}\in \Hom_{{\mathcal
E}^{\Phi}_B}(F(\cpx{X}),F(\cpx{Y}))$. Similarly, we define the
functor $G$.

The functor $F$ gives rise to a canonical algebra homomorphism
$\alpha_N: \scr G_A^{\Phi}(\cpx{X})\to\scr G_B^{\Phi}(F(\cpx{X}))$ for each
object $\cpx{X}\in \Db{A}$. In particular, for any
$\cpx{Z}\in\Db{B}$, we can regard $\scr G_B^{\Phi}(\cpx{Z}, F(\cpx{X}))$
as  an $\scr G_B^{\Phi}(\cpx{Z})$-$\scr G_A^{\Phi}(\cpx{X})$-bimodule via
$\alpha_N$. Note that  the homomorphism $\alpha_N$ coincides with
the one defined in Lemma \ref{lem1}, when $\cpx{X}$ is an
$A$-module.

\medskip
{\bf Proof of Theorem \ref{thm1}.}  We define $U:=\scr G^{\Phi}_{A}(X,
T_M (N\otimes_A X))$ and $V:=\scr G^{\Phi}_{B}(N\otimes_A X, T_N (X))$.
In the following we shall prove that $U$ and $V$ define a stable
equivalence of Morita type between $\Lambda$ and $\Gamma$.

First, we endow $U$ with a right $\Gamma$-module structure by
$u\cdot \gamma:=uG(\gamma)$ for $u\in U$ and $\gamma\in \Gamma$, and
endow $V$ with a right $\Lambda$-module structure by
$v\cdot\lambda:=vF(\lambda)$ for $v\in V$ and $\lambda\in \Lambda$.
Then, $U$ becomes  a $\Lambda$-$\Gamma$-bimodule, and  $V$ becomes a
$\Gamma$-$\Lambda$-bimodule.

By definition, we know $V=\Gamma$, and it is a projective left
$\Gamma$-module. Note that $_AX$ is a generator and  the images of
$T_P$ consists of projective modules. We conclude  that $T_M
(N\otimes_A X) = M\otimes_B (N\otimes_A X)\simeq X\oplus P\otimes_A
X\in\add(X)$. Thus $U$ is projective as a left $\Lambda$-module by
Lemma \ref{lem1}.

(1) $U \otimes_\Gamma V$, as a $\Lambda$-$\Lambda$-bimodule,
satisfies the condition (2) in Definition \ref{stm}.

Indeed, we write $ W:=\scr G^{\Phi}_{A}(X, (T_MT_N)(X))$, and define a
right $\Lambda$-module structure on $W$ by $w\cdot
\lambda':=w(GF)(\lambda')$ for  $w\in W$ and $\lambda'\in \Lambda$.
Then $W$ becomes a $\Lambda$-$\Lambda$-bimodule. Note  that  there
is a natural $\Lambda$-module isomorphism $\varphi: U \otimes_\Gamma
V \ra W$ defined by $x\otimes y\mapsto xG(y)$ for $x\in U$ and $y\in
V$. We claim that $\varphi$ is an isomorphism of
$\Lambda$-$\Lambda$-bimodules. In fact, it suffices to show that
$\varphi$ respects the structure of right $\Lambda$-modules.
However, this follows immediately from a verification: for $c\in U,
d\in V$ and $a\in \Lambda$, we have
$$
\varphi((c\otimes d)\cdot a)=\varphi(c\otimes (d F(a)))=cG(d
F(a))=cG(d)(GF)(a) =\varphi(c\otimes d)\cdot a.
$$
Combining this bimodule isomorphism $\varphi$ with Lemma \ref{lem1},
we get the following isomorphisms of $\Lambda$-$\Lambda$-bimodules:
 $$ (*)\qquad U \otimes_\Gamma V \simeq
\scr G^{\Phi}_{A}(X, (T_MT_N)(X))\simeq \scr G^{\Phi}_{A}(X, X)\oplus
\scr G^{\Phi}_{A}(X, P\otimes_A X)=\Lambda\oplus \scr G^{\Phi}_{A}(X,
P\otimes_A X),
$$
where  the second isomorphism follows  from  $M\otimes_{B}N\simeq
{A\oplus P}$ as $A$-$A$-bimodules, and where the right
$\Lambda$-module structure on $\scr G^{\Phi}_{A}(X, P\otimes_A X)$ is
induced by the canonical algebra homomorphism 
$$\Lambda \to
\scr G^{\Phi}_{A}( P\otimes_A X),
$$ which sends $(f_{i,j})_{i,j\in \Phi}$ in
$\Lambda$ to $(P\otimes_Af_{i,j})_{i,j\in \Phi}$ (see Lemma \ref{lem1}
(2)).

Now, we show that $\scr G^{\Phi}_{A}(X, P\otimes_A\,X)$ is a projective
$\Lambda$-$ \Lambda$-bimodule. In fact, since $P\in\add(_AA\otimes_k
A_A)$, we conclude that $\scr G^{\Phi}_{A}(X,
P\otimes_A\,X)\in\add(\scr G^{\Phi}_{A}(X, (A\otimes_k A)
\otimes_A\,X))$. Thus, it is sufficient to prove that
 $\scr G^{\Phi}_{A}(X, (A\otimes_k A) \otimes_A\,X)$ is a
projective $\Lambda$-$ \Lambda$-bimodule. For this purpose, we first
note that the right $\Lambda$-module structure on $\scr G^{\Phi}_{A}(X,
(A\otimes_k A) \otimes_A\,X)$ is induced by the canonical algebra
homomorphism  
$$\alpha_{A\otimes_kA}:\Lambda\to
\scr G^{\Phi}_{A}((A\otimes_k A) \otimes_AX),$$ which sends
$g:=(g_{i,j})_{i,j\in \Phi}$ in $\Lambda$ to
$((A\otimes_kA)\otimes_Ag_{i,j})_{i,j\in \Phi}$. Clearly, $_AA\otimes_k
A\otimes_A\,X\in\add(_AA)$. It follows that $\Ext_A^{l}((A\otimes_k
A) \otimes_AX, (A\otimes_k A) \otimes_AX)=0$ for any $l>0$, and
therefore $(A\otimes_kA)\otimes_Ag_{i,j}=0$ for any $0\ne i-j\in\Phi$.
Thus we have $\alpha_{A\otimes_kA}(g)=(A\otimes_kA)\otimes_Ag_{i,i}$. If
$$\pi:\Lambda\to \begin{pmatrix}
\End_A(X) &0&0&\cdots &0\\
0& \End_A(X)&0 &\cdots&0\\
\vdots&\vdots&\vdots &\vdots&\vdots\\
0&0&0&\cdots&\End_A(X)
\end{pmatrix}
$$ is the canonical projection  and $\mu'$ is
the canonical algebra homomorphism 
$$\End_A(X)\ra
\End_A\big((A\otimes_k A)\otimes_AX\big),$$ then
$\alpha_{A\otimes_kA}=\pi \begin{pmatrix}
\mu' &0&0&\cdots &0\\
0& \mu'&0 &\cdots&0\\
\vdots&\vdots&\vdots &\vdots&\vdots\\
0&0&0&\cdots&\mu'
\end{pmatrix}$. Thus the right $\Lambda$-module
structure on $\scr G^{\Phi}_{A}(X, (A\otimes_k A) \otimes_A\,X)$ is
induced by $ \begin{pmatrix}
\End_A(X) &0&0&\cdots &0\\
0& \End_A(X)&0 &\cdots&0\\
\vdots&\vdots&\vdots &\vdots&\vdots\\
0&0&0&\cdots&\End_A(X)
\end{pmatrix}$. Similarly, from the homomorphisms
$$\begin{array}{rl}
\Lambda=\scr G^{\Phi}_{A}(X)\epa {\pi} \begin{pmatrix}
\End_A(X) &0&0&\cdots &0\\
0& \End_A(X)&0 &\cdots&0\\
\vdots&\vdots&\vdots &\vdots&\vdots\\
0&0&0&\cdots&\End_A(X)
\end{pmatrix}\epa { \begin{pmatrix}
\mu &0&0&\cdots &0\\
0& \mu&0 &\cdots&0\\
\vdots&\vdots&\vdots &\vdots&\vdots\\
0&0&0&\cdots&\mu
\end{pmatrix}}\\
\begin{pmatrix}
\End_A((A\otimes_k X)) &0&0&\cdots &0\\
0& \End_A((A\otimes_k X))&0 &\cdots&0\\
\vdots&\vdots&\vdots &\vdots&\vdots\\
0&0&0&\cdots&\End_A((A\otimes_k X))
\end{pmatrix}= \scr G^{\Phi}_{A}(A\otimes_k X),
\end{array}$$ where
$\mu:\End_A(X)\to \End_A(A\otimes_kX)$ is induced by the tensor
functor $A\otimes_k-$, we see that the right $\Lambda$-module
structure on $\scr G^{\Phi}_{A}(X, A\otimes_k X)$ is also induced by
$ \begin{pmatrix}
\End_A(X) &0&0&\cdots &0\\
0& \End_A(X)&0 &\cdots&0\\
\vdots&\vdots&\vdots &\vdots&\vdots\\
0&0&0&\cdots&\End_A(X)
\end{pmatrix}$. Thus $\scr G^{\Phi}_{A}(X, (A\otimes_k A)
\otimes_A\,X)\,\simeq\,\scr G^{\Phi}_{A}(X, A\otimes_k X)$ as
$\Lambda$-$\Lambda$-bimodules. Moreover,  it follows from Lemma
\ref{lem3} that 
$$\scr G^{\Phi}_{A}(X, A\otimes_k X)\,\simeq\,
 \scr G^{\Phi}_{A}(X, A)\otimes  \begin{pmatrix}
X &0&0&\cdots &0\\
0& X&0 &\cdots&0\\
\vdots&\vdots&\vdots &\vdots&\vdots\\
0&0&0&\cdots&X
\end{pmatrix}$$ 
as
$\Lambda$-$ \begin{pmatrix}
\End_A(X) &0&0&\cdots &0\\
0& \End_A(X)&0 &\cdots&0\\
\vdots&\vdots&\vdots &\vdots&\vdots\\
0&0&0&\cdots&\End_A(X)
\end{pmatrix}$-bimodules. Since the $\begin{pmatrix}
X &0&0&\cdots &0\\
0& X&0 &\cdots&0\\
\vdots&\vdots&\vdots &\vdots&\vdots\\
0&0&0&\cdots&X
\end{pmatrix}$ can be
regarded as a right $\Lambda$-module via the homomorphism 
$$\Lambda\epa{\pi} \begin{pmatrix}
\End_A(X) &0&0&\cdots &0\\
0& \End_A(X)&0 &\cdots&0\\
\vdots&\vdots&\vdots &\vdots&\vdots\\
0&0&0&\cdots&\End_A(X)
\end{pmatrix},
$$  we
see that 
$$\scr G^{\Phi}_{A}(X, A\otimes_k X)\,\simeq\,
 \scr G^{\Phi}_{A}(X, A)\otimes_{\scr G_k^{\Phi}(k)} \begin{pmatrix}
X &0&0&\cdots &0\\
0& X&0 &\cdots&0\\
\vdots&\vdots&\vdots &\vdots&\vdots\\
0&0&0&\cdots&X
\end{pmatrix}
$$ as $\Lambda$-$\Lambda$-bimodules and that
 $$ \begin{pmatrix}
X &0&0&\cdots &0\\
0& X&0 &\cdots&0\\
\vdots&\vdots&\vdots &\vdots&\vdots\\
0&0&0&\cdots&X
\end{pmatrix}\simeq\scr G^{\Phi}_{A}(A, X)
$$ as
right $\Lambda$-modules. Thus 
$$ \scr G^{\Phi}_{A}(X, A)\otimes_{\scr G_k^{\Phi}(k)}\begin{pmatrix}
X &0&0&\cdots &0\\
0& X&0 &\cdots&0\\
\vdots&\vdots&\vdots &\vdots&\vdots\\
0&0&0&\cdots&X
\end{pmatrix}\,\in\,(\scr G^{\Phi}_{A}(X, A) \otimes_{\scr G_k^{\Phi}(k)}\scr G^{\Phi}_{A}(A, X)$$ as
$\Lambda$-$\Lambda$-bimodules. Since $_AA\in\add(X)$, we know that
$\scr G^{\Phi}_{A}(X, A)$ is a projective left $\Lambda$-module and
$\scr G^{\Phi}_{A}(A, X)$ is a projective right $\Lambda$-module. Hence
$ \scr G^{\Phi}_{A}(X, A)\otimes_{\scr G_k^{\Phi}(k)} \scr G^{\Phi}_{A}(A, X)$ is a projective
$\Lambda$-$\Lambda$-bimodule. This implies that $\scr G^{\Phi}_{A}(X,
P\otimes_A\,X)$ is a projective $\Lambda$-$ \Lambda$-bimodule.

(2) $V \otimes_\Lambda U$, as a $\Gamma$-$\Gamma$-bimodule, fulfills
the condition (2) in Definition \ref{stm}.

Let $Z:=\scr G^{\Phi}_{B}(N\otimes_A X, T_N T_M(N\otimes_A X))$.
Similarly, we endow $Z$ with a right $\Gamma$-module structure
defined by $z\cdot b:=z (FG)(b)$ for $z\in Z$ and $b\in \Gamma$.
Then $Z$ becomes a $\Gamma$-$\Gamma$-bimodule. Observe that, for
each $A$-module $Y$, there is a homomorphism
 $\Psi_Y: V\otimes_{\Lambda}\scr G^{\Phi}_{A}(X,
Y)\to \scr G^{\Phi}_{B}(N\otimes_A X, T_N(Y))$ of $\Gamma$-modules,
which is defined by $g\otimes h \mapsto gF(h)$ for $g\in V$ and
$h\in \scr G^{\Phi}_{A}(X, Y)$. This homomorphism is natural in $Y$. In
other words, $\Psi:V\otimes_{\Lambda}\scr G^{\Phi}_{A}(X, -)\to
\scr G^{\Phi}_{B}(N\otimes_A X, T_N(-))$  is a natural transformation of
functors from $A\modcat$ to $\Gamma\modcat$. Clearly, $\Psi_X$ is an
isomorphism of $\Gamma$-modules. It follows from $T_M (N\otimes_A
X)\in\add(X)$ that $\Psi{_{T_M (N\otimes_A X)}}: V \otimes_\Lambda U
\to Z$ is a $\Gamma$-isomorphism. Similarly, we can check that
$\Psi{_{T_M (N\otimes_A X)}}$ preserves the structure of right
$\Gamma$-modules. Thus $\Psi{_{T_M (N\otimes_A X)}}: V
\otimes_\Lambda U \to Z$ is an isomorphism of
$\Gamma$-$\Gamma$-bimodules, and there are the following
isomorphisms of $\Gamma$-$\Gamma$-bimodules:
$$ (**) \qquad V \otimes_\Lambda U \simeq Z\simeq
\Gamma \oplus \scr G^{\Phi}_{B}(N\otimes_A X, Q\otimes_B(N\otimes_A X)),
$$ where the second isomorphism is deduced from
$N\otimes_{A}M\simeq {B\oplus Q}$ as $B$-$B$-bimodules. By a
similar argument to that in the proof of (1), we can show that
$\scr G^{\Phi}_{B}(N\otimes_A X, Q\otimes_B(N\otimes_A X))$ is a
projective $\Gamma$-$\Gamma$-bimodule.

It remains to show that $U_{\Gamma}$ and $V_{\Lambda}$ are
projective. This is equivalent to showing that the tensor functors
$T_U:= U\otimes_\Gamma-: \Gamma\modcat \to\Lambda\modcat$ and
$T_V:=V\otimes_\Lambda-: \Lambda\modcat \to\Gamma\modcat$ are exact.
Since tensor functors are always right exact, the exactness of $T_U$
is equivalent to the property that $T_U$ preserve injective
homomorphisms of modules. Now, suppose that $f:C\to D$ is an
injective homomorphism between $\Gamma$-modules $C$ and $D$. Since
$\scr G^{\Phi}_{B}(N\otimes_A X, Q\otimes_B(N\otimes_A X))$ is a right
projective $\Gamma$-module, we know from $(**)$ that the composition
functor $T_VT_U$ is exact. In particular, the homomorphism
$(T_VT_U)(f):(T_VT_U)(C)\to (T_VT_U)(D)$ is injective. Let $\mu:
\Ker(T_U(f))\to T_U(C)$ be the canonical inclusion. Clearly, we have
$\mu T_U(f)=0$, which shows  $T_V(\mu
T_U(f))=T_V(\mu)(T_VT_U)(f)=0$. It follows that  $T_V(\mu)=0$ and
$(T_UT_V)(\mu)=0$. By ($*$), we get $\mu=0$, which implies that the
homomorphism  $T_U(f)$ is injective. Hence $T_U$ preserves injective
homomorphisms. Similarly,  we can show that $T_V$ preserves
injective homomorphisms. Consequently,  $U_\Gamma$ and
$V_\Lambda$ are projective.

Threrfore, the bimodules $U$ and $V$ define a stable equivalence of
Morita type between $\Lambda$ and $\Gamma$. This finishes the proof
of Theorem \ref{thm1}. $\square$

\smallskip
{\it Remarks.} (1) If we take $\Phi=\{0\}$ in Theorem \ref{thm1},
then we get \cite[Theorem 1.1]{LX3}. We mention that the proof of Theorem \ref{thm1} is somewhat routine. 

(2) Since stable equivalences of Morita type preserve the global,
dominant and finitistic dimensions of algebras, Theorem \ref{thm1}
asserts actually also that these dimensions are equal for algebras
$\scr G_A^{\Phi}(X)$ and $\scr G_B^{\Phi}(N\otimes_AX)$.

\section{A family of finite dimensional algebras such that they are derived equivalent but not stably equivalences of Morita type\label{fff}}

\subsection{Stable equivalences of Morita type for self-injective algebras\label{selfinjective}}

Derived equivalences between
self-injective algebras implies stable equivalences of Morita type
by a result of Rickard \cite{R3}, this makes stable equivalences of
Morita type closely  related to the Brou\'e abelian defect group
conjecture which essentially predicates a derived equivalence
between two block algebras \cite{B}, and thus also a stable
equivalence of Morita type between them.

In this section, we will apply Theorem \ref{thm1} and \cite[Theorem 1.2]{CPX} to self-injective algebras. It turns out that the
existence of a stable equivalence of Morita type between
$\Phi$-Beilinson-Green algebras of generators for one finite
admissible set $\Phi$ implies the one for all finite admissible
sets.

Throughout this section, we fix a finite admissible subset $\Phi$ of
$\mathbb{N}$, and assume that  $A$ and $B$ are indecomposable,
non-simple, self-injective algebras. Let $X$ be a generator for
$A$-mod with a decomposition $X:= A\oplus \displaystyle\bz_{1\leq
i\leq n} X_i$ , where $X_i$ is indecomposable and non-projective
such that $X_i\ncong X_t$ for $1\leq i\neq t\leq n$, and let $Y$ be
a generator for $B$-mod with a decomposition $Y:= B\sz
\displaystyle\bz_{1\leq j\leq m} Y_i$, where
 $Y_j$ is indecomposable and non-projective such that $Y_j\ncong
Y_s$ for $ 1\leq j\neq s\leq m$.

\begin{Lem} \label{lem 2.13}
$(1)$ The full subcategory of\, $\scr G_A^{\Phi}(X)\modcat$ consisting
of projective-injective  $\scr G_A^{\Phi}(X)$-modules is equal to
$\add(\scr G_A^{\Phi}(X, A))$. Particularly, if
$\scr G_A^{\Phi}(X)\neq\begin{pmatrix}
\End_A(X) &0&0&\cdots &0\\
0& \End_A(X)&0 &\cdots&0\\
\vdots&\vdots&\vdots &\vdots&\vdots\\
0&0&0&\cdots&\End_A(X)
\end{pmatrix}$, then $\dm(\scr G_A^{\Phi}(X))= 0$.

$(2)$ $\scr G_A^{\Phi}(X)$ has no semisimple direct summands.
\end{Lem}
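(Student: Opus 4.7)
The plan is to use the equivalence $\scr G_A^{\Phi}(X, -): \add_A^{\Phi}(X) \to \scr G_A^{\Phi}(X)\pmodcat $ from Lemma \ref{lem1}(1). Set $\Lambda := \scr G_A^{\Phi}(X)$. For the first inclusion in (1), since $X$ is a generator, ${}_AA \in \add(X)$, so $\scr G_A^{\Phi}(X, A)$ is automatically projective. Its injectivity follows from self-injectivity of $A$: the vanishing $\Ext^n_A(X, A) = 0$ for $n \geq 1$ forces
$$\scr G_A^{\Phi}(X, A) \;\cong\; \bigoplus_{i \in \Phi}\Hom_A(X, A),$$
sitting along the diagonal of $\Lambda$. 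I would then identify this with a direct summand of $D(\Lambda_\Lambda)$ using the explicit triangular matrix presentation of $\Lambda$, or verify equivalently that $\nu_\Lambda$ sends it to itself using $\nu_A({}_AA) \cong {}_AA$.

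For the reverse inclusion in (1), I would take an indecomposable projective-injective $\Lambda$-module $Q$ and, via the equivalence, write $Q = \scr G_A^{\Phi}(X, Y)$ for some indecomposable $Y \in \add(X)$ placed at some position $i \in \Phi$. A socle-level analysis of $Q$ using the triangular presentation of $\Lambda$ should force $Y \in \add({}_AA)$. The dominant-dimension part of (1) follows in the same spirit: once $\Lambda$ is not block diagonal, there exist $p < q$ in $\Phi$ with $\Ext^{q-p}_A(X, X) \neq 0$, and I would exhibit an indecomposable projective at a suitable position whose injective envelope picks up a simple summand lying outside $\add(\scr G_A^{\Phi}(X, A))$, yielding $\dm(\Lambda) = 0$.

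For part (2), I would argue contrapositively. A semisimple direct summand of $\Lambda$ yields a central idempotent $e \in \Lambda$ with $e\Lambda e$ semisimple. The triangular matrix structure of $\Lambda$ forces $e$ to be supported on the diagonal blocks, and projecting onto any single diagonal block produces a central idempotent of $E := \End_A(X)$ whose corner is semisimple; that is, a semisimple direct summand of $E$. But ${}_AA \in \add(X)$ together with $A$ being indecomposable non-simple self-injective means $\End_A({}_AA) \cong A\opp$ is a non-semisimple corner of $E$, which is the contradiction.

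The main obstacle will be the converse direction in (1) together with the dominant-dimension claim: determining the precise injective envelope of a general indecomposable projective of $\Lambda$ demands careful bookkeeping of both the $\add(X)$-component and the position index $i \in \Phi$, since the triangular matrix structure intertwines these two pieces of data in a nontrivial way.
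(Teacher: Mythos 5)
Your forward inclusion in (1) is essentially the paper's argument: since $A$ is self-injective, $\Ext^{>0}_A(X,A)=0$, so $\scr G_A^{\Phi}(X,A)$ is concentrated on the diagonal, and $\nu_\Lambda(\scr G_A^{\Phi}(X,A))\simeq \scr G_A^{\Phi}(X,D(A))\in\add(\scr G_A^{\Phi}(X,A))$ gives projective-injectivity. However, the two points you yourself single out as ``the main obstacle'' -- the converse inclusion in (1) and the claim $\dm(\scr G_A^{\Phi}(X))=0$ -- are exactly the parts you leave as plans (``a socle-level analysis \ldots should force'', ``I would exhibit \ldots''), so they are not proved. Neither requires the injective-envelope bookkeeping you anticipate. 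For the converse, the paper views $\Lambda=\scr G_A^{\Phi}(X)$ as an (iterated) triangular matrix algebra and quotes the description of indecomposable projective-injective modules over a triangular matrix algebra $\left(\begin{smallmatrix} C & M\\ 0 & D\end{smallmatrix}\right)$ (Auslander--Reiten--Smal\o, Prop.\ 2.5, p.\ 76): such a module is concentrated in one diagonal block, which reduces the question to projective-injective $\End_A(X)$-modules and hence to $\add(\Hom_A(X,A))$. For the dominant dimension, no injective envelope is needed: $\scr G_A^{\Phi}(X,A)$ is annihilated by the strictly upper-triangular ideal (the $\Ext$-part of $\Lambda$), while $\Lambda$ itself is not as soon as $\Lambda$ is not block diagonal; hence $\Lambda$ cannot be cogenerated by $\add(\scr G_A^{\Phi}(X,A))$, which is precisely the full subcategory of projective-injectives by the first part, so $\dm(\Lambda)=0$.

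In part (2) your reduction is sound up to the last step: central idempotents of the triangular algebra are indeed diagonal, and semisimplicity of the corner forces each diagonal component to be a central idempotent of $E:=\End_A(X)$ with semisimple corner, i.e.\ a semisimple direct summand of $E$. But the concluding contradiction is a non sequitur: an algebra may very well have a semisimple block and simultaneously a non-semisimple corner such as $\End_A(A)\cong A\opp$, so the mere presence of this corner contradicts nothing. The missing content (which the paper obtains by citing the first part of the proof of \cite[Corollary 4.7]{CPX}) is the implication: if $X$ is a generator and $E$ has a semisimple block, then any indecomposable summand $Y$ of $X$ lying in that block receives a nonzero map from its projective cover, which lies in $\add(X)$; the block being a semisimple direct factor forces that cover to be $Y$ itself, so $Y$ is projective with division endomorphism ring and no nonzero homomorphisms to or from the remaining indecomposable projectives, which splits off a semisimple block of $A$ -- contradicting that $A$ is indecomposable and non-simple. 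Without an argument of this kind (or the paper's route via a simple projective-injective $\Lambda$-module and part (1)), your proof of (2) does not close.
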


{\bf Proof}. (1) For convenience, we set $\Lambda= \scr G_A^{\Phi}(X)$.  Since $A$ is self-injective, it follows
from that $\nu{_\Lambda}(\scr G_A^{\Phi}(X,
A))\simeq \scr G_A(X,\nu_AA)\simeq \scr G_A^{\Phi}(X, DA)\in \add\big(
\scr G_A^{\Phi}(X, A)\big)$. Consequently, $\scr G_A^{\Phi}(X, A)$ is a
projective-injective $\Lambda$-module. We claim that, up to
isomorphism, each indecomposable projective-injective
$\Lambda$-module is a direct summand of $\scr G_A^{\Phi}(X, A)$. 
Let $\Lambda'=\begin{pmatrix}
C &M\\
0&D\\
\end{pmatrix}$ be a triangular matrix algebra. Then each $\Lambda'$-module can be represented by a triple $(_CX, _DY, f)$ with
$X\in C\modcat, Y\in C\modcat$ and $f: M_D\otimes Y\to X$ a $C$-module homomorphism. Let  $(_CX, _DY, f)$ be an indecomposable $\Lambda'$-module.
By \cite[Proposition 2.5, p. 76]{ARS}, there are two possibilities:

(i) $_DY=0$ and $_CX$ is an indecomposable projective-injective $C$-module with $M\otimes_CX=0$,

(ii) $_CX=0$ and $_DY$ is an indecomposable projective-injective $D$-module with $\Hom_D(M,Y )=0$.

Thus $\add(\scr G_A^{\Phi}(X, A))$ is just the full subcategory of\,
$\scr G_A^{\Phi}(X)\modcat$ consisting of projective-injective modules.

Finally, we consider the dominant dimension of
$\dm(\scr G_A^{\Phi}(X))$. Suppose $\scr G_A^{\Phi}(X)\neq\begin{pmatrix}
\End_A(X) &0&0&\cdots &0\\
0& \End_A(X)&0 &\cdots&0\\
\vdots&\vdots&\vdots &\vdots&\vdots\\
0&0&0&\cdots&\End_A(X)
\end{pmatrix}$. Since
$A$ is self-injective, we have 
$$\scr G_A^{\Phi}(X, A)=\begin{pmatrix}
\Hom_A(X, A) &0&0&\cdots &0\\
0& \Hom_A(X, A)&0 &\cdots&0\\
\vdots&\vdots&\vdots &\vdots&\vdots\\
0&0&0&\cdots&\Hom_A(X, A)
\end{pmatrix}.$$  It
follows that $\scr G_A^{\Phi}(X, A)$ is annihilated by 
$$\begin{pmatrix}
0 &\Ext^1(X,X)&\Ext^2(X,X)&\cdots &\Ext^m(X,X)\\
0& 0&\Ext^1(X,X)&\cdots&\Ext^{m-1}(X,X)\\
0&0&0&\cdots&\Ext^1(X,X)\\
\vdots&\vdots&\vdots &\vdots&\vdots\\
0&0&0&\cdots&0
\end{pmatrix},$$
but it is not annihilated by $\Lambda$. Hence $\Lambda$ cannot be cogenerated by
$\scr G_A^{\Phi}(X, A)$. This implies that $\dm(\scr G_A^{\Phi}(X))= 0$. 

(2) Contrarily, we suppose that the algebra $\scr G_A^{\Phi}(X)$ has a
semisimple direct summand. Then $\scr G_A^{\Phi}(X)$ has a simple
projective-injective module $S$. According to $(1)$, we know that
$S$ must be a simple projective-injective $\End_A(X)$-module. Then
it follows from the first part of the proof of \cite[Corollary 4.7]{CPX} that $A$ has a semisimple direct summand. Clearly, this
is contrary to our initial assumption that $A$ is indecomposable and
non-simple. Thus $\scr G_A^{\Phi}(X)$ has no  semisimple direct
summands. $\square$

\begin{Theo}\label{coro14}
If the algebras $\scr G_A^{\Phi}(X)$ and $\scr G_B^{\Phi}(Y)$ are stably
equivalent of Morita type, then $n=m$ and  there are bimodules
$_AM_B$ and $_BN_A$ which define a stable equivalence of Morita type
between $A$ and $B$ such that, up to the ordering of indices,
\,$_AM\otimes_B Y_i\simeq {X_i\oplus P_i}$ as $A$-modules, where
$_AP_i$ is projective for all $i$ with $ 1\leq i\leq n$. Moreover,
for any finite admissible subset $\Psi$ of $\mathbb{N}$, there is a
stable equivalence of Morita type between $\scr G_A^{\Psi}(X)$ and
$\scr G_B^{\Psi}(Y)$.
\end{Theo}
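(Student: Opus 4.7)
The plan is to reduce the given stable equivalence of Morita type between $\scr G_A^{\Phi}(X)$ and $\scr G_B^{\Phi}(Y)$ to one between the endomorphism algebras $\End_A(X)$ and $\End_B(Y)$, invoke \cite[Theorem 2]{CPX} (the $\Phi=\{0\}$ case of the present theorem) to produce a stable equivalence of Morita type between $A$ and $B$ with the required compatibility on summands, and finally lift back up to an arbitrary $\Psi$ via Theorem \ref{thm1}.

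Set $\Lambda:=\scr G_A^{\Phi}(X)$ and $\Gamma:=\scr G_B^{\Phi}(Y)$. By Lemma \ref{lem 2.13}, the indecomposable projective-injective $\Lambda$-modules are precisely the summands of $\scr G_A^{\Phi}(X,A)$, and similarly for $\Gamma$; moreover, neither algebra has a semisimple direct summand. Since any stable equivalence of Morita type between algebras without semisimple summands sends projective-injective modules to projective-injective modules up to projective direct summands, it induces a bijection between the iso classes of indecomposable projective non-injective modules on the two sides. On each side these are parameterized by pairs $(i,X_j)$ (resp.\ $(i,Y_k)$) with $i\in\Phi$, yielding $|\Phi|\cdot n$ (resp.\ $|\Phi|\cdot m$) classes, so $n=m$. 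After reindexing, the bijection identifies the class of $X_i$ with that of $Y_i$.

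Next, let $e\in\Lambda$ be the idempotent concentrated at the $0\in\Phi$ component, so that $e\Lambda e\cong \End_A(X)$, and let $f\in\Gamma$ be the analogous idempotent with $f\Gamma f\cong \End_B(Y)$. Using that the triangular block structure of $\Lambda$ and $\Gamma$ is detected intrinsically by the projective-injective layering (which the stable equivalence respects by the previous paragraph), the truncated bimodules $eUf$ and $fVe$ (where $(U,V)$ defines the stable equivalence between $\Lambda$ and $\Gamma$) inherit both the one-sided projectivity and the required tensor-product identities, and hence define a stable equivalence of Morita type between $\End_A(X)$ and $\End_B(Y)$. Applying \cite[Theorem 2]{CPX} to the self-injective algebras $A,B$ (neither with semisimple summands) and the generators $X,Y$ then furnishes bimodules $_AM_B$ and $_BN_A$ defining a stable equivalence of Morita type between $A$ and $B$ with $M\otimes_B Y_i\simeq X_i\oplus P_i$ for projective $A$-modules $P_i$, establishing the first two assertions.

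For the last assertion, let $\Psi$ be any finite admissible subset of $\mathbb{N}$. Theorem \ref{thm1} yields a stable equivalence of Morita type between $\scr G_A^{\Psi}(X)$ and $\scr G_B^{\Psi}(N\otimes_A X)$. A short adjunction argument using $N\otimes_A M\simeq B\oplus Q$ (with $Q$ a projective $B$-bimodule) shows that ${}_BN$ contains every indecomposable projective $B$-module as a direct summand, so $\add({}_BN)=\add({}_BB)$. Combined with $N\otimes_A X_i\simeq Y_i\oplus Q_i$ (derived by symmetry from the compatibility of the previous step), this gives $\add({}_B(N\otimes_A X))=\add({}_BY)$, so $\scr G_B^{\Psi}(N\otimes_A X)$ is Morita equivalent, and a fortiori stably equivalent of Morita type, to $\scr G_B^{\Psi}(Y)$, and the desired equivalence between $\scr G_A^{\Psi}(X)$ and $\scr G_B^{\Psi}(Y)$ follows by composition. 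The main obstacle lies in the corner-restriction step: one must verify that idempotent truncation preserves the Morita-type identities $U\otimes_\Gamma V\simeq \Lambda\oplus P'$ and $V\otimes_\Lambda U\simeq \Gamma\oplus Q'$, which works here because the projective bimodule summands $P',Q'$ are controlled by the projective-injective modules described in Lemma \ref{lem 2.13}, so that passing to the corners $e\Lambda e,f\Gamma f$ leaves the tensor identities intact up to projective $e\Lambda e$- and $f\Gamma f$-bimodule summands.
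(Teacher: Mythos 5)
Your overall strategy (reduce to the $\Phi=\{0\}$ case and then quote the CPX theorem for self-injective algebras) is plausible in outline, but the step it hinges on is not justified. You truncate at the idempotent $e$ concentrated in the degree-$0$ component, so that $e\Lambda e\cong\End_A(X)$, and assert that $eUf$ and $fVe$ still define a stable equivalence of Morita type between $\End_A(X)$ and $\End_B(Y)$. Idempotent truncation does not preserve stable equivalences of Morita type in general; the only available restriction theorem here, \cite[Theorem 1.2]{CPX}, requires the idempotents to be $\nu$-stable, i.e. $\add(\Lambda e)$ must be the projective-injective (Frobenius) part of $\Lambda$. Lemma \ref{lem 2.13} identifies that part as $\add(\scr G_A^{\Phi}(X,A))$, which for $|\Phi|>1$ is \emph{not} the degree-zero corner: the projective $\Lambda$-module $\Lambda e$ you chose is in general not injective, so $e$ is not $\nu$-stable, and your sentence about the triangular block structure being ``detected intrinsically by the projective-injective layering'' is an assertion rather than an argument for why the identities $U\otimes_\Gamma V\simeq\Lambda\oplus P'$ and $V\otimes_\Lambda U\simeq\Gamma\oplus Q'$ survive this truncation. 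The paper avoids this entirely: it truncates at $e'=\mathrm{diag}(e,\dots,e)$ with $\Lambda e'\simeq\scr G_A^{\Phi}(X,A)$, so that $e'\Lambda e'\cong A\times\cdots\times A$ and $f'\Gamma f'\cong B\times\cdots\times B$; \cite[Theorem 1.2]{CPX} applies to these $\nu$-stable corners, and Liu's theorem on summands of stable equivalences of Morita type (\cite{L1}, or \cite[Lemma 4.1]{CPX}) then produces bimodules $_AM_B$, $_BN_A$ with $\add(M\otimes_BY)=\add(X)$, from which both $n=m$ and $M\otimes_BY_i\simeq X_i\oplus P_i$ follow at once. No stable equivalence between $\End_A(X)$ and $\End_B(Y)$ is ever needed.

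Your separate counting argument for $n=m$ also has a gap: from ``projective-injectives go to projective-injectives'' you cannot conclude a bijection on isomorphism classes of indecomposable projective \emph{non-injective} modules. That would require invariance of the total number of indecomposable projectives, i.e. of the number of simple modules, which is exactly the Auslander--Reiten conjecture and is open for stable equivalences of Morita type (as the introduction of the paper points out); at best one controls the $\nu$-stable part. Moreover, the bijection you produce (if it existed) would match $\Lambda$-projectives with $\Gamma$-projectives and says nothing yet about identifying the $A$-modules $X_i$ with the $B$-modules $Y_i$; in the paper both statements are consequences of $\add(M\otimes_BY)=\add(X)$. Your final step---Theorem \ref{thm1} plus $\add(N\otimes_AX)=\add(Y)$ to pass to an arbitrary $\Psi$---is fine and agrees with the paper, but it rests on the unproved middle steps above.
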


{\bf Proof}. For convenience, we set $\Lambda_0=\End(X)$, $\Lambda= \scr G_A^{\Phi}(X)$ and $\Gamma_0=\End(Y), \Gamma=
\scr G_B^{\Phi}(Y)$. By Lemma \ref{lem 2.13}, the algebras $\Lambda$ and
$\Gamma$ have no semisimple direct summands.  Let $e$ be the
idempotent in $\Lambda_{0}$ corresponding to the direct summand $A$
of $X$, and let $f$ be the idempotent in $\Gamma_{0}$ corresponding
to the direct summand $B$ of $Y$. Let $e'=\begin{pmatrix}
e &0&0&\cdots &0\\
0& e&0 &\cdots&0\\
\vdots&\vdots&\vdots &\vdots&\vdots\\
0&0&0&\cdots&e
\end{pmatrix}$
 and 
 $f'=\begin{pmatrix}
f &0&0&\cdots &0\\
0& f&0 &\cdots&0\\
\vdots&\vdots&\vdots &\vdots&\vdots\\
0&0&0&\cdots&f
\end{pmatrix}$ be the idempotents of $\Lambda$ and $\Gamma$, respectively.
Note that $\Lambda e'\simeq
\scr G_A^{\Phi}(X, A)$ as $\Lambda$-modules  and $\Gamma  f'\simeq
\scr G_B^{\Phi}(Y, B)$ as $\Gamma$-modules. Clearly, $e'\Lambda e'\simeq
\begin{pmatrix}
A &0&0&\cdots &0\\
0& A&0 &\cdots&0\\
\vdots&\vdots&\vdots &\vdots&\vdots\\
0&0&0&\cdots&A
\end{pmatrix}$ and $f'\, \Gamma f' \simeq \begin{pmatrix}
B &0&0&\cdots &0\\
0& B&0 &\cdots&0\\
\vdots&\vdots&\vdots &\vdots&\vdots\\
0&0&0&\cdots&B
\end{pmatrix}$ as algebras. Moreover, we see that
$e'\Lambda \simeq \begin{pmatrix}
X &0&0&\cdots &0\\
0& X&0 &\cdots&0\\
\vdots&\vdots&\vdots &\vdots&\vdots\\
0&0&0&\cdots&X
\end{pmatrix}$ as $e'\Lambda e'$-modules, and $f'\, \Gamma \simeq \begin{pmatrix}
Y &0&0&\cdots &0\\
0& Y&0 &\cdots&0\\
\vdots&\vdots&\vdots &\vdots&\vdots\\
0&0&0&\cdots&Y
\end{pmatrix}$ as
$f'\Gamma f'$-modules. Suppose that a stable equivalences of Morita type
between $\Lambda$ and $\Gamma$ is given.  we know that the idempotent $e'$ in $\Lambda$
and the idempotent $f'$ in $\Gamma$ satisfy the conditions in \cite[Theorem 1.2]{CPX}. It follows from  that there are bimodules $_{e'\Lambda e'}M'_{f'\Gamma f'}$ and 
$_{f'\Gamma f'}N'_{e'\Lambda e'}$ which
define a stable equivalence of Morita type between $e'\Lambda e'$ and $f'\Gamma f'$ such
that $\add(M'\otimes_{f'\Gamma f'} f'\Gamma)= \add(e'\Lambda)$. 

We know that $$e'\Lambda e'\simeq
\begin{pmatrix}
A &0&0&\cdots &0\\
0& A&0 &\cdots&0\\
\vdots&\vdots&\vdots &\vdots&\vdots\\
0&0&0&\cdots&A
\end{pmatrix}\simeq A\times A\times\cdots\times A $$ and $$f'\, \Gamma f' \simeq \begin{pmatrix}
B &0&0&\cdots &0\\
0& B&0 &\cdots&0\\
\vdots&\vdots&\vdots &\vdots&\vdots\\
0&0&0&\cdots&B
\end{pmatrix}\simeq\times B\times B\times \cdots\times B$$ as algebras. By \cite[Theorem 2.2]{L1} or the proof of \cite[Lemma 4.1]{CPX}, there are stable equivalences of Morita type between summands of $e'\Lambda e'$ and $f'\Gamma f'$.
 It follows from  that there are bimodules $_AM_B$ and 
$_BN_A$ which
define a stable equivalence of Morita type between $A$ and $B$ such
that $\add(M\otimes_B Y)= \add(X)$. 
By the given decompositions of
$X$ and $Y$, we conclude that $n=m$ and, up to the ordering of
direct summands, we may assume that \,$_AM\otimes_B Y_i\simeq
{X_i\oplus P_i}$ as $A$-modules, where $_AP_i$ is projective for all
$i$ with $ 1\leq i\leq n$. Now, the last statement in this corollary
follows immediately from Theorem \ref{thm1}. Thus the proof is
completed. $\square$

\medskip
Usually, it is difficult to decide whether an algebra is not stably
equivalent of Morita type to another algebra. The next corollary,
however, gives a sufficient condition to assert when two algebras
are not stably equivalent of Morita type.

\begin{Koro}\label{coro15}
Let $n$ be a non-negative integer. Let $W$ be an indecomposable
non-projective $A$-module. Suppose that $\Omega_A^{s}(W)\not\simeq
W$ for any non-zero integer $s$. Set $W_n= \bz _{0\leq i\leq n}
\Omega_A^{i}(W)$. Then, for any finite admissible subset $\Psi$ of
$\mathbb{N}$,  the algebras $\scr G_A^{\Psi}(A \oplus W_n \oplus
\Omega_A^{l}(W))$ and $\scr G_A^{\Psi}(A \oplus W_n\oplus
\Omega_A^{m}(W))$ are not stably equivalent of Morita type whenever
$m $ and $l$ belong to $\mathbb{N}$ with $n< m< l$.
\end{Koro}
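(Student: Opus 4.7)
I argue by contradiction, using Theorem \ref{coro14} to reduce the problem to an equality of index sets in $\mathbb{Z}$. Suppose, for contradiction, that $\scr G_A^{\Psi}(A \oplus W_n \oplus \Omega_A^l(W))$ and $\scr G_A^{\Psi}(A \oplus W_n \oplus \Omega_A^m(W))$ are stably equivalent of Morita type. Since $A$ is self-injective, $\Omega_A$ is an autoequivalence of $\stmodcat{A}$, and the hypothesis $\Omega_A^s(W) \not\simeq W$ for every nonzero $s \in \mathbb{Z}$ forces the modules $\Omega_A^i(W)$ to be pairwise non-isomorphic in $\stmodcat{A}$. Hence each of the two generators has exactly $n+2$ pairwise non-isomorphic non-projective indecomposable summands. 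Theorem \ref{coro14} then yields bimodules $_AM_A$ and $_AN_A$ defining a stable equivalence of Morita type from $A$ to itself such that, after a suitable reordering, $M \otimes_A Y_i \simeq X_i \oplus P_i$ with $P_i$ projective, where $Y_i$ (respectively $X_i$) runs over the non-projective indecomposable summands of $A \oplus W_n \oplus \Omega_A^m(W)$ (respectively $A \oplus W_n \oplus \Omega_A^l(W)$).

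Next I would show that $T_M := M \otimes_A -$ commutes with $\Omega_A$ in $\stmodcat{A}$: exactness of $T_M$ combined with projectivity of $_AM$ implies that $T_M$ sends any projective presentation to a projective presentation, giving a natural isomorphism $\Omega_A \circ T_M \simeq T_M \circ \Omega_A$ in $\stmodcat{A}$. Writing $M \otimes_A W \simeq \Omega_A^{j_0}(W)$ in $\stmodcat{A}$ for a uniquely determined integer $j_0$ (unique because the $\Omega_A^i(W)$ are pairwise distinct in $\stmodcat{A}$), the commutation gives $M \otimes_A \Omega_A^j(W) \simeq \Omega_A^{j + j_0}(W)$ in $\stmodcat{A}$ for every $j \geq 0$. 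The bijection of summands provided by Theorem \ref{coro14} forces $j_0 \in \{0, 1, \ldots, n, l\}$, hence in particular $j_0 \geq 0$.

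It follows that the set of indices $\{j_0, j_0+1, \ldots, j_0+n, j_0+m\}$ of the image syzygies must coincide with $\{0, 1, \ldots, n, l\}$ as a subset of $\mathbb{Z}$. Since $m > n$ and $l > n$, comparing minima of these two $(n+2)$-element sets forces $j_0 = 0$; the two sets then become $\{0, 1, \ldots, n, m\}$ and $\{0, 1, \ldots, n, l\}$, which are equal only if $m = l$, contradicting $m < l$. The main obstacle I anticipate is not this numerical endgame but the two structural ingredients on which it rests: that Theorem \ref{coro14} genuinely applies to our generators (requiring their non-projective summands to be pairwise non-isomorphic in $\stmodcat{A}$) and that $T_M$ commutes with $\Omega_A$ up to projective summands; both hinge essentially on the self-injectivity of $A$ and on the hypothesis $\Omega_A^s(W) \not\simeq W$ for nonzero $s$.
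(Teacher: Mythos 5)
Your proposal is correct and follows essentially the same route as the paper: argue by contradiction, invoke Theorem \ref{coro14} to obtain a self-stable-equivalence $_AM_A$ matching the non-projective indecomposable summands, use that $M\otimes_A-$ commutes with $\Omega_A$ up to projective summands because $M$ is projective on both sides, and extract a numerical contradiction from the syzygy indices. The only cosmetic difference is the endgame: the paper tracks the single index $\sigma(l)=\sigma(0)+l$ and concludes $l\le\sigma(l)\le m<l$, whereas you compare the full index sets, force $j_0=0$ by a minimum argument and then $m=l$; both versions are valid.
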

{\bf{Proof}}. Suppose that there is  a finite admissible subset
$\Psi$ of $\mathbb{N}$ such that $\scr G_A^{\Psi}(A \oplus W_n \oplus
\Omega_A^{m}(W))$ and $\scr G_A^{\Psi}(A \oplus W_n\oplus
\Omega_A^{l}(W))$ are  stably equivalent of Morita type for some
fixed $l, m\in\mathbb{N}$ with $n< m< l$. Set $\Phi_1=\{0,1,\cdots,
n\}\cup\{l\}$ and $\Phi_2=\{0,1,\cdots,n\}\cup\{ m\}$. Then, by
Theorem \ref{coro14}, we know that there exist bimodules $_AM_A$ and
$_AN_A$ which define a stable equivalence of Morita type between $A$
and itself, and  that there is a bijection $\sigma: \Phi_1\to
\Phi_2$ such that $M \otimes_A \Omega_{A}^{j}(W) \simeq
\Omega{_{A}^{\sigma(j)}}(W) \oplus P_j$ as $A$-modules, where $P_j$
is  projective  for each  $j\in \Phi_1$. In particular, we have $M
\otimes_A W \simeq \Omega{_{A}^{\sigma(0)}}(W) \oplus P_0$. Since
$M$ is  projective as a one-sided module, we know that
$M\otimes_A\Omega_{A}^{l}(W) \simeq \Omega{_{A}^{\sigma(0)+l}}(W)
\oplus P_l'$ with $P_l' \in \add(_AA)$. Note that  $M \otimes_A
\Omega_{A}^{l}(W) \simeq \Omega{_{A}^{\sigma(l)}}(W) \oplus P_l$. It
follows that $\Omega{_{A}^{\sigma(0)+l}}(W) \simeq
\Omega{_{A}^{\sigma(l)}}(W) $. Consequently, we have $\sigma(l)=
\sigma(0)+l \geq l$ since $W$ is not $\Omega$-periodic. Hence $l\le
\sigma(l)\leq m < l$, a contradiction. This shows that
$\scr G_A^{\Psi}(A \oplus W_n \oplus \Omega_A^{m}(W))$ and
$\scr G_A^{\Psi}(A \oplus W_n\oplus \Omega_A^{l}(W))$ cannot be stably
equivalent of Morita type whenever $l$ and $m$ $\in\mathbb{N}$ with
$n< m < l$. $\square$

\medskip
This corollary will be used in the next subsection.

\subsection{A family of derived-equivalent algebras: another answer to Thorsten Holm's question \label{family}}

In a talk at a workshop in Goslar, Germany, Thorsten Holm considered
the following qustion on derived equivalences and stable
equivalences of Morita type:

\medskip
{\bf Question.} Is there any infinite series of finite-dimensional
$k$-algebras  such that they have the same dimension and are all
derived-equivalent, but not stably equivalent of Morita type ?

\medskip

It has an affirmative answer in \cite{CPX}. In this section, we shall apply our results in the previous sections
to give an affirmative answer for question again.

Liu and Schulz in \cite{Ls} constructed a local
symmetric $k$-algebra $A$ of dimension 8 and an indecomposable
$A$-module $M$ such that all the syzygy modules $\Omega_A^n (M)$
with $n\in \mathbb{Z}$ are 4-dimensional and pairwise
non-isomorphic.  This algebra $A$ depends on a non-zero parameter
$q\in k$, which is not a root of unity, and has an infinite
DTr-orbit in which each module has the same dimension. Ringel in \cite{R} carried out a thorough
investigation of Auslander-Reiten components of this algebra. Based on this symmetric algebra
and a recent result in \cite{Pan} together with the results in the
previous sections, we shall construct an infinite family of
algebras, which provides a positive solution to the above question.

From now on, we fix a non-zero element $q$ in the field $k$, and
assume that $q$ is not a root of unity. The $k$-algebra $A$ defined by Liu-Schulz is an associative algebra
(with identity) over $k$ with the generators: $x_0, x_1, x_2$, and
the relations: $ x_i^2= 0, \quad \mbox{and}\quad x_{i+1}x_i+q
x_ix_{i+1}= 0 \quad \mbox{for}\quad  i= 0, 1, 2. $

\noindent Here, and in what follows, the subscript is modulo $3$.

Let $n$ be a fixed natural number, and let $\Phi=\{0\}$ or $\{0,
1\}$. For $j\in \mathbb{Z}\,$, set $u_j: =x_2+q^jx_1 $, $I_j :=
Au_j$, $J_j:=u_jA$, $I:= \bz\limits_{i=0}^{n}I_i$ and
$\Lambda_j^{\Phi}:= \scr G_A^{\Phi}(A \sz I \sz I_j )$.

With these notations in mind, the main result in this section can be
stated as follows:

\begin{Theo} For any  $m \geq n+4$ and $\Phi=\{0,
1\}$,  we have

\medskip
$(1)$ $\dk{(\Lambda_m^{\Phi})}= \dk{(\Lambda_{m+1}^{\Phi})}$.

\medskip
$(2)$ $\gd(\Lambda_m^{\Phi})= \infty$.

\medskip
$(3)$ $\dm(\Lambda_m^{\Phi})=0.$

\medskip
$(4)$  $\Lambda_m^{\Phi}$ and $\Lambda_{m+1}^{\Phi}$ are
derived-equivalent.

\medskip
$(5)$ If $l>m$, then $\Lambda_l^{\Phi}$ and $\Lambda_m^{\Phi}$ are
not stably equivalent of Morita type. \label{th1}
\end{Theo}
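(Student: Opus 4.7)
The structural input I would use is the Liu-Schulz theorem: $A$ is local symmetric of dimension $8$, each $I_j = Au_j$ is indecomposable non-projective of $k$-dimension $4$, the syzygy functor satisfies $\Omega_A(I_j) \simeq I_{j+1}$ (with appropriate conventions, so the index-shift by syzygy is constant), and the full $\Omega$-orbit $\{I_j\}_{j\in\mathbb Z}$ consists of pairwise non-isomorphic modules since $q$ is not a root of unity. All five statements then follow from this orbit picture combined with the results of earlier sections.

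For (1), since $\Omega_A$ is an autoequivalence of the stable category of the symmetric algebra $A$, all Ext-spaces $\Ext^i_A(I_j, I_k)$ depend only on the difference $k-j$. Because $I$ is fixed and $\Omega_A(I_m) = I_{m+1}$, the building blocks of $\Lambda_m^\Phi = \scr G_A^\Phi(A \oplus I \oplus I_m)$ and $\Lambda_{m+1}^\Phi$ match up block-by-block after one tracks what happens at the ``new'' summand; a short bookkeeping (using the fact that $\dk \Hom_A(I_j,I_k) = \dk\underline{\Hom}_A(I_j,I_k) + \dk\Hom_A(I_j,P_{I_k})$ and both pieces are shift-invariant, since $A$ is symmetric) gives the desired equality of total dimensions. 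For (2), the simple $\Lambda_m^\Phi$-module corresponding to the indecomposable summand $I_m$ has an infinite minimal projective resolution over $\Lambda_m^\Phi$, because $I_m$ itself has no finite projective resolution over $A$ (the syzygy orbit is infinite). For (3), the condition $\Phi = \{0,1\}$ together with $\Ext^1_A(I_m, I_m) \neq 0$ (the $I_m$ is non-projective over a non-semisimple self-injective algebra) shows that $\scr G_A^\Phi(A \oplus I \oplus I_m)$ is strictly larger than its diagonal Auslander-type part, and Lemma \ref{lem 2.13}(1) immediately gives $\dm(\Lambda_m^\Phi) = 0$.

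For (4), my plan is to produce an explicit Okuyama-Rickard style tilting complex in $\Kb{\scr G_A^\Phi(A \oplus I \oplus I_m)\pmodcat}$ by mutating only the indecomposable summand coming from $I_m$, replacing it with the two-term complex given by a minimal projective cover of $I_m$ inside $\add(A)$, i.e.\ the syzygy presentation $0 \to I_{m+1} \to P \to I_m \to 0$. The crucial input is that this mutation is supported on a single summand that is disjoint from $\add(A \oplus I)$, and one checks (via Lemma \ref{lem1}(1), which identifies $\add_A^\Phi(X)$ with $\scr G_A^\Phi(X)\pmodcat$) that the endomorphism algebra of the mutated tilting complex is isomorphic to $\scr G_A^\Phi(A \oplus I \oplus I_{m+1}) = \Lambda_{m+1}^\Phi$. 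The assumption $m \geq n+4$ guarantees that $I_m$ is not in $\add(A \oplus I)$, so no summand of the tilting complex collapses, and the Hom-vanishing conditions for a tilting complex reduce to the syzygy identity $\Omega_A(I_m) \simeq I_{m+1}$ together with the shift-invariance used in (1). This is the technically heaviest step and the main obstacle: assembling the tilting complex so that its endomorphism algebra is literally $\Lambda_{m+1}^\Phi$ and not merely Morita equivalent to it requires being careful with the $\Phi$-grading in the Beilinson-Green matrix form.

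For (5), I would apply Corollary \ref{coro15} directly. Setting $W := I_0$ (or, after re-indexing, whichever $I_j$ so that $\Omega_A^i(W) = I_i$ for $0 \leq i \leq n$), we have $W_n = \bz_{0 \leq i \leq n}\Omega_A^i(W) = I$, while $\Omega_A^m(W) = I_m$ and $\Omega_A^l(W) = I_l$. The non-periodicity hypothesis $\Omega_A^s(W) \not\simeq W$ for $s \neq 0$ is exactly the Liu-Schulz statement that the $\Omega$-orbit is infinite, which holds because $q$ is not a root of unity. Thus Corollary \ref{coro15} yields that $\Lambda_m^\Phi = \scr G_A^\Phi(A \oplus W_n \oplus \Omega_A^m(W))$ and $\Lambda_l^\Phi = \scr G_A^\Phi(A \oplus W_n \oplus \Omega_A^l(W))$ are not stably equivalent of Morita type whenever $n < m < l$, which covers the range $m \geq n+4$ and $l > m$ stated in (5). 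Thus (5) is an immediate corollary of the earlier machinery once the Liu-Schulz non-periodicity is in hand.
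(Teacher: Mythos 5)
Your parts (3) and (5) follow the paper's own argument almost verbatim (Lemma \ref{lem 2.13} for $\dm(\Lambda_m^\Phi)=0$, and Corollary \ref{coro15} applied with $W=I_0$, $W_n=I$ together with $\Omega_A(I_j)\simeq I_{j+1}$ and non-periodicity for (5)), but (1), (2) and (4) contain genuine gaps. The clearest one is (2): the inference ``$I_m$ has no finite projective resolution over $A$, hence the simple $\Lambda_m^\Phi$-module at the summand $I_m$ has infinite projective dimension'' is false in general. Over any representation-finite self-injective algebra the Auslander algebra $\End_A(A\oplus X)$ has global dimension at most $2$, even though every non-projective summand of $X$ has infinite projective dimension over $A$; so infinite projective dimension of $I_m$ over $A$ proves nothing about the simple module over the endomorphism-type algebra. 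The paper instead quotes $\gd(\Lambda_m^{\{0\}})=\infty$ from \cite[Theorem 6.1]{CPX} and then uses that $\Lambda_m^{\{0,1\}}$ is a triangular matrix algebra with diagonal entries $\Lambda_m^{\{0\}}$, whence $\gd(\Lambda_m^{\{0,1\}})=\infty$.

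For (1), shift-invariance under $\Omega_A$ only controls Hom- and Ext-spaces in which both arguments move along the orbit; here $M=A\oplus I$ is fixed, so you need $\dk\Hom_A(I_i,I_m)=\dk\Hom_A(I_i,I_{m+1})$ for $0\le i\le n$, and this is not a formal consequence of $A$ being symmetric: applying $\Hom_A(I_i,-)$ to $0\to I_{m+1}\to A\to I_m\to 0$ gives only $\dk\Hom_A(I_i,I_m)+\dk\Hom_A(I_i,I_{m+1})=4$ once the relevant $\Ext^1$ vanish, so equality requires the explicit value $2$, i.e.\ the computations of \cite[Lemma 6.5]{CPX} ($\dk\Hom_A(M,I_m)=\dk\Hom_A(I_m,M)=2n+6$, $\dk\End_A(I_m)=3$, $\dk\Ext^1_A(I_m,I_m)=1$), and this is exactly where $m\ge n+4$ enters. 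Likewise in (3), $\Ext^1_A(I_m,I_m)\neq 0$ is a computed fact, not a consequence of $I_m$ being non-projective over a self-injective algebra (rigid non-projective modules exist). For (4), your mutation plan omits the essential hypotheses: for the two-term complex replacing the summand $I_m$ to be a tilting complex one needs the vanishing $\Ext^1_A(M,I_m)=\Ext^1_A(I_m,M)=\Ext^1_A(M,I_{m+1})=\Ext^1_A(I_{m+1},M)=0$, which comes from \cite[Lemmas 6.4 and 6.5]{CPX} and is the real reason for assuming $m\ge n+4$ (the condition $I_m\notin\add(A\oplus I)$ already holds for every $m>n$); moreover the identification of the endomorphism algebra of the mutated complex with $\Lambda_{m+1}^{\{0,1\}}$, including the $\Phi$-layer, is exactly the step you concede you have not carried out. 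The paper bypasses all of this by checking the above vanishing and invoking Lemma \ref{lem5} for the sequence $0\to I_{m+1}\to A\to I_m\to 0$, the $\Ext^2$ hypotheses there reducing to the listed $\Ext^1$ conditions because $A$ is self-injective.
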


\begin{Rem}  Note that if $\Phi=\{0\}$, then the above result was known in
\cite{CPX}.
\end{Rem}

The following corollary is a consequence of Theorem \ref{th1}, which solves the above mentioned question positively.

\begin{Koro}
There exists an infinite series of finite-dimensional $k$-algebras
$A_i,\,i\in\mathbb{N}$, such that

$(1)$ $\dk(A_i)= \dk(A_{i+1})$ for all $i\in {\mathbb N}$,

$(2)$ all $A_i$ have the same global and dominant dimensions,

$(3)$ all $A_i$ are derived-equivalent, and

$(4)$ $A_i$ and $A_j$ are not stably equivalent of Morita type for
$i\neq j$. \label{coro16}
\end{Koro}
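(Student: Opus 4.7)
The plan is to deduce Corollary \ref{coro16} as an essentially immediate consequence of Theorem \ref{th1}. Fix once and for all the admissible set $\Phi=\{0,1\}$ and a natural number $n$, and then define the family by
$$A_i := \Lambda_{n+4+i}^{\Phi} \qquad (i \in \mathbb{N}).$$
Each $A_i$ is of the form $\scr G_A^{\Phi}(A\oplus I\oplus I_{n+4+i})$ in the Liu--Schulz setting, so each $A_i$ is a well-defined finite-dimensional $k$-algebra.

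Next I would verify the four properties in order, checking off the corresponding clause of Theorem \ref{th1} in each case. For (1), Theorem \ref{th1}(1) gives $\dim_k A_i = \dim_k A_{i+1}$ for every $i\ge 0$, so $\dim_k A_i$ is constant in $i$ by an obvious induction. For (2), Theorem \ref{th1}(2) gives $\gd(A_i) = \infty$ for all $i$ (a common value), while Theorem \ref{th1}(3) gives $\dm(A_i) = 0$ for all $i$; thus all $A_i$ share the same global and dominant dimensions. For (3), Theorem \ref{th1}(4) says $A_i$ and $A_{i+1}$ are derived-equivalent for every $i$; since derived equivalence is an equivalence relation, transitivity yields that all members of the family are derived-equivalent to one another.

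Finally, for (4), take any $i\neq j$ in $\mathbb{N}$ and assume without loss of generality that $j>i$, so that $n+4+j > n+4+i \ge n+4 > n$. Then Theorem \ref{th1}(5), applied with $m = n+4+i$ and $l = n+4+j$, asserts precisely that $A_i = \Lambda_{n+4+i}^{\Phi}$ and $A_j = \Lambda_{n+4+j}^{\Phi}$ are not stably equivalent of Morita type, as required.

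There is no real obstacle here beyond bookkeeping: the whole content has already been packaged into Theorem \ref{th1}, and the corollary is simply the observation that items (1)--(5) of that theorem translate, for the indexed family $\{A_i\}_{i\in\mathbb{N}}$, into the four properties asserted by Corollary \ref{coro16}. The only point at which one must be slightly careful is making sure that the derived-equivalence in (3) is genuinely pairwise (handled by transitivity from the consecutive case) and that the non-equivalence in (4) is genuinely pairwise (handled directly by clause (5), which covers all pairs with $n<m<l$, and our indexing starts at $n+4>n$). $\square$
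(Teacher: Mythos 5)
Your proposal is correct and matches the paper exactly: the paper presents Corollary \ref{coro16} as an immediate consequence of Theorem \ref{th1} (with no separate written proof), and your choice $A_i=\Lambda_{n+4+i}^{\Phi}$ together with the clause-by-clause translation, transitivity for (3), and Theorem \ref{th1}(5) for the pairwise non-equivalence in (4) is precisely the intended argument.
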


The proof of Theorem \ref{th1} will cover the rest of this section.

\medskip
The following  result can be directly deduced from the work of in \cite{Pan, PZ}.

\begin{Lem}\label{lem5}
Let $B$ be a $k$-algebra. Let   $Y$ and $M$ be $B$-modules with $M$
a generator for $B\modcat$. If $\Ext_B^1(M, \Omega_B(Y))=
\Ext_B^1(Y, M)=0$, then the endomorphism algebras $\End_B(M\sz Y)$
and $\End_B(M\sz \Omega_B(Y))$  are derived equivalent.  If, in
addition, $\Ext_B^2(M, \Omega_B(Y))= \Ext_B^2(Y, M)=0$, then the
$\{0, 1\}$-Auslander-Yoneda algebras $ \scr G_B^{\{0,1\}}(M\sz Y)$ and
$\scr G_B^{\{0,1\}}(M\sz \Omega_B(Y))$ are derived equivalent.

\end{Lem}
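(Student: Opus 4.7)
The plan is to build a two-term tilting complex over $\Gamma := \End_B(M \sz \Omega_B(Y))$ whose endomorphism ring is $\End_B(M \sz Y)$, and then invoke Rickard's Morita theorem. The starting point is the projective-cover sequence
$$0 \to \Omega_B(Y) \xrightarrow{\iota} P \xrightarrow{\pi} Y \to 0;$$
since $M$ is a generator we have $P \in \add(_BB) \subseteq \add(M)$, so the middle term already lies in $\add(M \sz \Omega_B(Y))$. Let $F := \Hom_B(M \sz \Omega_B(Y), -)$, which restricts to an equivalence $\add(M \sz \Omega_B(Y)) \xrightarrow{\sim} \pmodcat{\Gamma}$.

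I would then introduce the candidate
$$T^\bullet \;:=\; F(M) \;\sz\; T_1^\bullet, \qquad T_1^\bullet := \bigl(F(\Omega_B(Y)) \xrightarrow{F(\iota)} F(P)\bigr),$$
with $F(M)$ a stalk in degree $0$ and $T_1^\bullet$ placed in degrees $-1, 0$. Since $T_1^\bullet$ is the mapping cone of $F(\iota)$, the triangle $T_1^\bullet[-1] \to F(\Omega_B(Y)) \to F(P) \to T_1^\bullet$ in $\Kb{\pmodcat{\Gamma}}$ shows every indecomposable projective $\Gamma$-module lies in the triangulated hull of $\add(T^\bullet)$, so the generation axiom is clear. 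For the $\Hom$-vanishing $\Hom_{\Kb{\pmodcat{\Gamma}}}(T^\bullet, T^\bullet[n]) = 0$ for $n \neq 0$, the cross blocks pass through $F$ to standard chain-map/null-homotopy calculations with $\iota$, where the obstructions turn out to be precisely $\Ext^1_B(Y, M)$ (for $\Hom(T_1^\bullet, F(M)[1])$, read off by applying $\Hom_B(-, M)$ to the short exact sequence and using $\Ext^1_B(P, M) = 0$) and $\Ext^1_B(M, \Omega_B(Y))$ (in the dual direction via $\Hom_B(M, -)$), both of which vanish by hypothesis; the self-block $\Hom(T_1^\bullet, T_1^\bullet[n])$ reduces by the same mechanism.

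Identifying $\End_{\Kb{\pmodcat{\Gamma}}}(T^\bullet) \simeq \End_B(M \sz Y)$ also comes from the same short exact sequence: the self-$\Hom$ of $T_1^\bullet$ computes $\End_B(Y)$, the cross-$\Hom$'s with $F(M)$ compute $\Hom_B(M, Y)$ and $\Hom_B(Y, M)$, and $\End_{\Kb}(F(M)) = \End_B(M)$, so these blocks assemble exactly into the matrix form of $\End_B(M \sz Y)$. Rickard's Morita theorem then delivers the first derived equivalence.

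For the second statement I would lift the same complex to the $\{0,1\}$-Beilinson-Green setting, viewing $\scr G_B^{\{0,1\}}(M \sz \Omega_B(Y))$ (resp.\ $\scr G_B^{\{0,1\}}(M \sz Y)$) as a triangular extension of $\Gamma$ (resp.\ $\End_B(M \sz Y)$) by the degree-$1$ Ext-piece. The additional vanishing $\Ext^2_B(M, \Omega_B(Y)) = \Ext^2_B(Y, M) = 0$ is exactly what kills the new $\Hom$-obstructions appearing one degree higher in this extended setting, so the lifted complex remains tilting with the expected endomorphism algebra; this step is the $\Phi$-covering machinery already developed in \cite{Pan, PZ}. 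The main obstacle throughout is the careful Ext-bookkeeping for the four cross blocks in $\Hom(T^\bullet, T^\bullet[\pm 1])$, doubled up in the Beilinson-Green lift; once this is in place, Rickard's theorem mechanizes the rest.
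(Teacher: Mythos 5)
Your proposal is essentially correct, but you should know that the paper offers no argument of its own here: its entire ``proof'' of Lemma \ref{lem5} is the remark that the statement can be deduced from \cite{Pan, PZ}, and the two-term complex you build over $\Gamma=\End_B(M\oplus\Omega_B(Y))$ from the projective cover sequence is precisely the tilting complex attached to an $\add(M)$-split sequence in that circle of papers (going back to Hu--Xi's $\mathcal{D}$-split sequences \cite{hx2}), so in effect you are unfolding the argument behind the citation rather than taking a different route. Two small points of precision in your Ext-bookkeeping: the self-block $\Hom(T_1^{\bullet},T_1^{\bullet}[1])$ identifies with $\Ext^1_B(Y,P)$, which vanishes only because $P\in\add(M)$ and $\Ext^1_B(Y,M)=0$, so the generator hypothesis is used here and is worth saying explicitly; and in this particular construction $\Ext^1_B(M,\Omega_B(Y))=0$ is not a $\Hom$-vanishing obstruction at all (those blocks vanish for degree reasons or by injectivity of $F(\iota)$), but is exactly what makes $\Hom_{\Kb{\pmodcat{\Gamma}}}(F(M),T_1^{\bullet})\simeq\Hom_B(M,Y)$, i.e.\ it is consumed in the identification of the endomorphism ring with $\End_B(M\oplus Y)$. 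For the second statement you, like the paper, ultimately defer to the $\Phi$-covering machinery of \cite{Pan, PZ} (equivalently \cite{HAU}); that matches the paper's level of detail, but be aware that the lift to the triangular matrix algebra $\scr G_B^{\{0,1\}}$ is not a formal consequence of the first part --- it is exactly where the hypotheses $\Ext^2_B(M,\Omega_B(Y))=\Ext^2_B(Y,M)=0$ enter, since morphism spaces in the $\{0,1\}$-covering category already carry $\Ext^1$-components and the shifted $\Hom$-vanishing tests there produce $\Ext^2$-obstructions.
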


Having made the previous preparations, now we can prove Theorem
\ref{th1}.

{\bf Proof of Theorem \ref{th1}}. Let $m\geq n+4$. Set $M :=\,A\sz
I$ with $I=\bz\limits_{i=0}^{n}I_i$, and $V_m: = M \oplus I_m$.

(1) By \cite[Lemma 6.5(5)]{CPX}, we know that  $\Ext_A^1(M, I_m)=
\Ext_A^1(I_m, M)= 0$. Clearly, we have
$$\dk(\Lambda_m^{\{0\}})=\dk\End_A(M)+\dk\Hom_A(M,I_m)+\dk\Hom_A(I_m,
M)+\dk\End_A(I_m) $$and
$$\dk(\Lambda_m^{\{0,1\}})= 2\dk(\Lambda_m^{\{0\}})+ \dk\Ext_A^1(M, M)+
\dk\Ext_A^1(I_m, I_m).$$ By \cite[Lemma 6.5]{CPX},   we get
$$\dk\End_A(I_m)=3,\; \dk\Ext_A^1(I_m, I_m)=1,
\;\dk\Hom_A(M,I_m)= \dk\Hom_A(I_m, M)=2n+6.$$  It follows that
$\dk{(\Lambda_m^{\Phi})}= \dk{(\Lambda_{m+1}^{\Phi})}$.

\smallskip
(2) It follows from \cite[Theorem 6.1]{CPX} that  $\gd(\Lambda_m^{\{0\}})=\infty$. We know that 
$$\Lambda_m^{\{0,1\}}=\begin{pmatrix}
\Lambda_m^{\{0\}} &\Ext^1(M\oplus I_m)\\
0&\Lambda_m^{\{0\}}\\
\end{pmatrix}$$
This yields $\gd(\Lambda_m^{\{0,1\}})=\infty$.

\smallskip
(3) By Lemma \ref{lem 2.13}, we have $\dm(\Lambda_m^{\{0,1\}})= 0$.

(4) Consider the exact sequence
$$
\delta_m: 0\lra I_{m+1}\lra A\lra I_m\lra 0$$ in $ A\modcat$. Since
$m\geq n+4$, it follows from \cite[Lemmas 6.5(5) and 6.4(4)]{CPX} that $\Ext_A^1(M, I_{m+1})= \Ext_A^1(I_{m+1}, M)=
\Ext_A^1(I_m, M)= \Ext_A^1(M, I_{m})= 0$. Note that $A$ is
self-injective. By Lemma \ref{lem5}, we conclude that the algebras
$\Lambda_m^{\Phi}$ and $\Lambda_{m+1}^{\Phi}$ are derived-equivalent
for $\Phi=\{0, 1\}$.

(5) It follows from \cite[Lemma 6.4]{CPX} that  $\Omega_A(I_j)=I_{j+1}$
for each $j\in \mathbb{Z}$ and that the $A$-modules $I_j$  are
pairwise non-isomorphic for all $j\in \mathbb{Z}$. Now, we define
$W:= I_0$ and $W_n:=\oplus_{0\le j\le n} I_j$. Then, by Corollary
\ref{coro15}, the algebras $\Lambda_l^{\Phi}$ and $\Lambda_m^{\Phi}$
are not stably equivalent of Morita type if $l>m$. Thus the proof is
completed. $\square$

\medskip

\section{Stable equivalences of Morita type for Beilinson-Greeen algebras of associated graded algebras\label{BG}}

In this section, we can construct stable equivalences of Morita type between Beilinson-Green algebra of 
group graded algebra from group graded stable equivalences of Morita type.

Let $A=A_0\oplus A_1\oplus A_2\cdots \oplus A_n$ be a graded $k$-algebra with multiplication induced from multiplication of $A$, given by
$$
A_i\otimes A_j\to \left\{
\begin{array}{cc}
A_{i+j}& if\ \  i+j\leq n \\
0
&if \ i+j\geq n .
\end{array}
\right.$$
We say a graded $A$-module $M$ has degree $\leq n$ if $M_i=0$ for $i\geq n+1$. Let $\G_n(A)$ be the category of graded $A$-modules with degree $\leq n$ and degree zero maps.
Recall that Beilinson-Green algebra \cite{Bei, G} is defined as follows:
$$
\bar{A}=\begin{pmatrix}
 A_0   & A_1& \cdots &A_{n-1} &A_n\\
 0   &  A_0&\cdots & A_{n-2} & A_{n-1}\\
\vdots & \vdots & \vdots & \vdots & \vdots \\
0& 0 &
\cdots &A_0 &A_1\\
0 &0&\cdots &0 &A_0\\
\end{pmatrix},
$$
Let $M\in\G_n(A\otimes B^{op})$ and $N\in\G_n(B\otimes A^{op})$.
Then we have the main theorem of this section. 

\begin{Theo} Suppose that $M$ and $N$ induce a graded stable equivalence of Morita type between $A$ and $B$, then there is a stable equivalence of Morita type between the Beilinson-Green algebras $\bar{A}$ and $\bar{B}$.\label{pp}
\end{Theo}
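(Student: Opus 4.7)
The plan is to show that the Beilinson-Green construction defines a monoidal, exact, projectivity-preserving functor from graded bimodules to bimodules over the Beilinson-Green algebras, so that the data witnessing a graded stable equivalence of Morita type transports verbatim to $\bar{A}$ and $\bar{B}$.

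First I would define, for any $L\in\G_n(A\otimes B\opp)$, the associated $\bar{A}$-$\bar{B}$-bimodule $\bar{L}=(L_{j-i})_{0\le i,j\le n}$ (upper triangular, zero below the diagonal), with the natural matrix action induced by the graded multiplications $A_a\otimes L_b\to L_{a+b}$ and $L_b\otimes B_c\to L_{b+c}$. One checks directly that this assignment gives an additive, exact $k$-linear functor which sends $A$ (regarded as a graded bimodule over itself) to $\bar{A}$ and $B$ to $\bar{B}$. Applying this functor to the hypothesised $M$ and $N$ produces the candidate bimodules $\bar{M}$ and $\bar{N}$.

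The central technical step is the monoidality
$$
\bar{M}\otimes_{\bar{B}}\bar{N}\;\cong\;\overline{M\otimes_B N}
\quad\text{and}\quad
\bar{N}\otimes_{\bar{A}}\bar{M}\;\cong\;\overline{N\otimes_A M}.
$$
Using the diagonal primitive idempotents $e_0,\ldots,e_n$ of $\bar{B}$ and Peirce decomposition, one finds
$$
e_i(\bar{M}\otimes_{\bar{B}}\bar{N})e_j\;=\;\Big(\bigoplus_{i\le k\le j} M_{k-i}\otimes_k N_{j-k}\Big)\Big/\sim,
$$
where the identifications arising from multiplication by off-diagonal elements $b\in e_k\bar{B}e_{k+\ell}=B_\ell$ become precisely the relations $(mb)\otimes n=m\otimes(bn)$ that define the degree-$(j-i)$ component of $M\otimes_B N$. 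Applying this functor to the hypotheses $M\otimes_B N\cong A\oplus P$ and $N\otimes_A M\cong B\oplus Q$ of graded bimodules yields $\bar{M}\otimes_{\bar{B}}\bar{N}\cong\bar{A}\oplus\bar{P}$ and $\bar{N}\otimes_{\bar{A}}\bar{M}\cong\bar{B}\oplus\bar{Q}$.

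Next I would verify the projectivity conditions. A free graded $A$-$A$-bimodule generated in degree $d$ has the form $A\otimes_k V_d\otimes_k A$; a direct computation identifies its Beilinson-Green companion with $\bigoplus_k \bar{A}e_k\otimes_k V_d\otimes_k e_{k+d}\bar{A}$, which is manifestly a projective $\bar{A}$-$\bar{A}$-bimodule (summand of $\bar{A}\otimes_k\bar{A}$), and passing to direct summands shows that $\bar{P}$ and $\bar{Q}$ are projective bimodules. For the one-sided projectivities of $\bar{M}$ and $\bar{N}$, the equivalence $\G_n(A)\simeq\bar{A}\modcat$ obtained by Peirce-decomposing an $\bar{A}$-module through $e_0,\ldots,e_n$ (and matching $e_kN$ with the appropriate graded component under a reindexing) converts graded one-sided projectivity of $M$ and $N$ into ordinary projectivity of $\bar{M}$ and $\bar{N}$ as one-sided modules over $\bar{A}$ and $\bar{B}$. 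Collecting these ingredients verifies Definition \ref{stm} for the pair $(\bar{M},\bar{N})$. The main obstacle is the monoidality statement: the tensor product over the matrix-like algebra $\bar{B}$ must be unwound by Peirce decomposition so that the off-diagonal components of $\bar{B}$ (not merely the diagonal copies of $B_0$) reproduce exactly the graded $B$-identifications — a routine but delicate bookkeeping that is the heart of the transfer principle.
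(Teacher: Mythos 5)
Your proposal is correct and follows essentially the same route as the paper: transport $M$, $N$, $P$, $Q$ entrywise to the upper-triangular companions $\bar{M}$, $\bar{N}$, $\bar{P}$, $\bar{Q}$ and verify the conditions of Definition \ref{stm} directly, with $\bar{M}\otimes_{\bar{B}}\bar{N}\simeq\bar{A}\oplus\bar{P}$ and $\bar{N}\otimes_{\bar{A}}\bar{M}\simeq\bar{B}\oplus\bar{Q}$ obtained by the same matrix-component bookkeeping the paper carries out. Your Peirce-decomposition justification of $\bar{M}\otimes_{\bar{B}}\bar{N}\simeq\overline{M\otimes_B N}$ and your identification of the companion of a shifted free bimodule with $\bigoplus_k \bar{A}e_k\otimes_k V_d\otimes_k e_{k+d}\bar{A}$ are somewhat more explicit than the paper's displayed computations, but they do not amount to a different method.
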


{\bf{Proof}}. 
Suppose that $M$ and $N$ induce a graded stable equivalence of Morita type between $A$ and $B$, then 

$(1)$ $M$ is a left graded projective $A$-module and right graded projective $B$-module, and $N$ is a left graded projective $B$-module and right graded projective $A$-module.

$(2)$ $M\otimes_{B}N\simeq {A\oplus P}$ as $A$-$A$-graded bimodules for
some projective graded $A$-$A$-bimodule $P$, and $N\otimes_{A}M \simeq
{B\oplus Q}$ as $B$-$B$-graded bimodules for some projective graded 
$B$-$B$-bimodule $Q$. 

Set $
\bar{M}=\begin{pmatrix}
 M_0   & M_1 & \cdots &M_{n-1} &M_n\\
 0   &  M_0&\cdots & M_{n-2} & M_{n-1}\\
\vdots & \vdots & \vdots & \vdots & \vdots \\
0& 0 &
\cdots &M_0 &M_1\\
0 &0&\cdots &0 &M_0\\
\end{pmatrix} \quad\text{and}\quad \bar{N}=\begin{pmatrix}
 N_0   & N_1 & \cdots &N_{n-1} &N_n\\
 0   &  N_0&\cdots & N_{n-2} & N_{n-1}\\
\vdots & \vdots & \vdots & \vdots & \vdots \\
0& 0 &
\cdots &N_0 &N_1\\
0 &0&\cdots &0 &N_0\\
\end{pmatrix}.$
Since 
$$
\begin{array}{rl}
\bar{A}\bar{M}\bar{B}=
\begin{pmatrix}
 A_0   & A_1& \cdots &A_{n-1} &A_n\\
 0   &  A_0&\cdots & A_{n-2} & A_{n-1}\\
\vdots & \vdots & \vdots & \vdots & \vdots \\
0& 0 &
\cdots &A_0 &A_1\\
0 &0&\cdots &0 &A_0\\
\end{pmatrix}
\begin{pmatrix}
 M_0   & M_1 & \cdots &M_{n-1} &M_n\\
 0   &  M_0&\cdots & M_{n-2} & M_{n-1}\\
\vdots & \vdots & \vdots & \vdots & \vdots \\
0& 0 &
\cdots &M_0 &M_1\\
0 &0&\cdots &0 &M_0\\
\end{pmatrix} 
\begin{pmatrix}
 B_0   &B_1 & \cdots &B_{n-1} &B_n\\
 0   &  B_0&\cdots & B_{n-2} & B_{n-1}\\
\vdots & \vdots & \vdots & \vdots & \vdots \\
0& 0 &
\cdots &B_0 &B_1\\
0 &0&\cdots &0 &B_0\\
\end{pmatrix}\\=\begin{pmatrix}
 A_0\otimes M_0\otimes B_0  & A_0\otimes M_0\otimes B_1\oplus A_0\otimes M_1\otimes B_0\oplus A_1\otimes M_0\otimes B_0 & \cdots &\oplus_{i+j+k=n} A_i\otimes M_j\otimes B_k\\
 0   &  A_0\otimes M_0\otimes B_0&\cdots & \oplus_{i+j+k=n-1} A_i\otimes M_j\otimes B_k\\
\vdots & \vdots & \vdots & \vdots  \\
0& 0 &
\cdots &  \oplus_{i+j+k=1} A_i\otimes M_j\otimes B_k\\
0 &0&\cdots &A_0\otimes M_0\otimes B_0\\
\end{pmatrix},
\end{array}
$$
we thus get $\bar{A}\bar{M}\bar{B}\subset\bar{M}$, so $\bar{M}$ is a $\bar{A}$-$\bar{B}$-bimodule. Similarly, $\bar{N}$ is a $\bar{B}$-$\bar{A}$-bimodule. 

$(1)$ We claim that $\bar{M}$ is a left projective $\bar{A}$-module and right projective $\bar{B}$-module. It is well-known that $\{A_{n-i}(-i)|0\leq i\leq n\}$ are graded projective $A$-modules.
It follows from $_AM\in \add \oplus_{0\leq i\leq n} A_{n-i}(-i)$ that $_{\bar{A}}\bar{M}=\oplus_{0\leq i\leq n} M_{n-i}(-i)\in \add \bar{A}$. 
Consequently,  $\bar{M}$ is a left projective $\bar{A}$-module. Similarly, $\bar{M}$ is a  right projective $\bar{B}$-module and $\bar{N}$ is a left projective $\bar{B}$-module and right projective $\bar{A}$-module.

$(2)$ $
\bar{P}=\begin{pmatrix}
 P_0   & P_1 & \cdots &P_{n-1} &P_n\\
 0   &  P_0&\cdots & P_{n-2} & P_{n-1}\\
\vdots & \vdots & \vdots & \vdots & \vdots \\
0& 0 &
\cdots &P_0 &P_1\\
0 &0&\cdots &0 &P_0\\
\end{pmatrix} $ 
 and 
 $
\bar{Q}=\begin{pmatrix}
 Q_0   & Q_1 & \cdots &Q_{n-1} &Q_n\\
 0   &  Q_0&\cdots & Q_{n-2} & Q_{n-1}\\
\vdots & \vdots & \vdots & \vdots & \vdots \\
0& 0 &
\cdots &Q_0 &Q_1\\
0 &0&\cdots &0 &Q_0\\
\end{pmatrix} $ 
are projective bimodules.
If $P$ is a graded projective $A$-$A$-bimodule, then $P\in \add (A\otimes A^{op})$ in $\gr{A\otimes A^{op}}$. Since
$$
\begin{array}{rl}
\bar{A}\otimes\bar{A}=
\begin{pmatrix}
 A_0   & A_1 & \cdots &A_{n-1} &A_n\\
 0   &  A_0&\cdots & A_{n-2} & A_{n-1}\\
\vdots & \vdots & \vdots & \vdots & \vdots \\
0& 0 &
\cdots &A_0 &A_1\\
0 &0&\cdots &0 &A_0\\
\end{pmatrix}\otimes 
\begin{pmatrix}
 A_0   & A_1 & \cdots &A_{n-1} &A_n\\
 0   &  A_0&\cdots & A_{n-2} & A_{n-1}\\
\vdots & \vdots & \vdots & \vdots & \vdots \\
0& 0 &
\cdots &A_0 &A_1\\
0 &0&\cdots &0 &A_0\\
\end{pmatrix}=\\\begin{pmatrix}
 A_0\otimes A_0 & A_0\otimes A_1\oplus A_1\otimes A_0& \cdots &\oplus_{i+j=n} A_i\otimes A_j\\
 0   &  A_0\otimes B_0&\cdots & \oplus_{i+j+k=n-1} A_i\otimes A_j\\
\vdots & \vdots & \vdots & \vdots  \\
0& 0 &
\cdots & A_0\otimes A_1\oplus A_1\otimes A_0\\
0 &0&\cdots &A_0\otimes B_0\\
\end{pmatrix}
\end{array},
$$
it follows that $\bar{P}=\begin{pmatrix}
 P_0   & P_1 & \cdots &P_{n-1} &P_n\\
 0   &  P_0&\cdots & P_{n-2} & P_{n-1}\\
\vdots & \vdots & \vdots & \vdots & \vdots \\
0& 0 &
\cdots &P_0 &P_1\\
0 &0&\cdots &0 &P_0\\
\end{pmatrix}\in \add(\bar{A}\otimes\bar{A}^{op})$. Then $\bar{P}$ is a projective $\bar{A}$-$\bar{A}$-bimodule. 
Similarly, $\bar{Q}$ is a projective $\bar{B}$-$\bar{B}$-bimodule.

$(3)$ $$
\begin{array}{rl}
\bar{M}\otimes\bar{N}=\begin{pmatrix}
 M_0   & M_1 & \cdots &M_{n-1} &M_n\\
 0   &  M_0&\cdots & M_{n-2} & M_{n-1}\\
\vdots & \vdots & \vdots & \vdots & \vdots \\
0& 0 &
\cdots &M_0 &M_1\\
0 &0&\cdots &0 &M_0\\
\end{pmatrix} \otimes\begin{pmatrix}
 N_0   & N_1 & \cdots &N_{n-1} &N_n\\
 0   &  N_0&\cdots & N_{n-2} & N_{n-1}\\
\vdots & \vdots & \vdots & \vdots & \vdots \\
0& 0 &
\cdots &N_0 &N_1\\
0 &0&\cdots &0 &N_0\\
\end{pmatrix}\\\simeq
\begin{pmatrix}
 M_0\otimes N_0 & M_0\otimes N_1\oplus M_1\otimes N_0& \cdots &\oplus_{i+j=n} M_i\otimes N_j\\
 0   &  M_0\otimes N_0&\cdots & \oplus_{i+j+k=n-1} M_i\otimes N_j\\
\vdots & \vdots & \vdots & \vdots  \\
0& 0 &
\cdots & M_0\otimes N_1\oplus M_1\otimes N_0\\
0 &0&\cdots &M_0\otimes N_0\\
\end{pmatrix}\\\simeq
\begin{pmatrix}
 A_0\oplus P_0  & A_1\oplus P_1& \cdots &A_{n-1}\oplus P_{n-1} &A_n\oplus P_n\\
 0   &  A_0\oplus P_0&\cdots & A_{n-2}\oplus P_{n-2}& A_{n-1}\oplus P_{n-1}\\
\vdots & \vdots & \vdots & \vdots & \vdots \\
0& 0 &
\cdots &A_0\oplus P_0 &A_1\oplus P_1\\
0 &0&\cdots &0 &A_0\oplus P_0\\
\end{pmatrix}\\\simeq
\begin{pmatrix}
 A_0   & A_1 & \cdots &A_{n-1} &A_n\\
 0   &  A_0&\cdots & A_{n-2} & A_{n-1}\\
\vdots & \vdots & \vdots & \vdots & \vdots \\
0& 0 &
\cdots &A_0 &A_1\\
0 &0&\cdots &0 &A_0\\
\end{pmatrix}
\oplus
\begin{pmatrix}
 P_0   & P_1 & \cdots &P_{n-1} &P_n\\
 0   &  P_0&\cdots & P_{n-2} & P_{n-1}\\
\vdots & \vdots & \vdots & \vdots & \vdots \\
0& 0 &
\cdots &P_0 &P_1\\
0 &0&\cdots &0 &P_0\\
\end{pmatrix}\simeq\bar{A}\oplus\bar{P}.
\end{array}
$$ as $\bar{A}$-$\bar{A}$-bimodules. 
Similarly, $\bar{N}\otimes\bar{M}\simeq\bar{B}\oplus\bar{Q}$ as $\bar{B}$-$\bar{B}$-bimodules
This completes the proof of Theorem \ref{pp}. $\square$

By the above methods, we can get stable equivalences of Morita type between the Beilinson-Green algebras from $G$-graded stable equivalences of Morita type between $G$-graded algebras $A$ and $B$ for a finite group $G$.
Associated a $G$-graded $k$-algebra $A$, there is a Beilinson-Green algebra $\bar{A}$ defined as a $G\times G$-matrix algebra with $(\bar{A}_{gh})_{g,h\in G}$, where 
$\bar{A}_{gh}=A_{gh^{-1}}$. If $M=\oplus_{g\in G}M_g$ is a $G$-graded $A$-$B$-bimodule, then $\bar{M}=(M_{gh^{-1}})_{g,h\in G}$ is a $\bar{A}$-$\bar{B}$-bimodule.
Hence we have the following theorem.

\begin{Theo} Suppose that there is a $G$-graded stable equivalence of Morita type between $G$-graded algebras $A$ and $B$, then there is a stable equivalence of Morita type between the Beilinson-Green algebras $\bar{A}$ and $\bar{B}$.
\end{Theo}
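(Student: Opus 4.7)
The plan is to transfer the argument of Theorem \ref{pp} from the bounded $\mathbb N$-graded setting to general $G$-gradings. The crucial observation is that, with the Beilinson-Green convention $\bar A_{gh}=A_{gh^{-1}}$, matrix multiplication of $G\times G$-matrices is naturally compatible with the $G$-graded tensor product: for $G$-graded bimodules $X,Y$ over suitable $G$-graded algebras, the substitution $k\mapsto(gk^{-1},kh^{-1})$ gives a bijection between $G$ and the set of factorizations $g_1g_2=gh^{-1}$, yielding
$$(\bar X\otimes\bar Y)_{gh}=\bigoplus_{k\in G}X_{gk^{-1}}\otimes Y_{kh^{-1}}\simeq\bigoplus_{g_1g_2=gh^{-1}}X_{g_1}\otimes Y_{g_2}=(X\otimes Y)_{gh^{-1}}=\overline{X\otimes Y}_{gh}.$$

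First I would form $\bar M=(M_{gh^{-1}})_{g,h\in G}$ and $\bar N=(N_{gh^{-1}})_{g,h\in G}$ and check, directly from the graded bimodule axiom for $M$ and $N$, that they become an $\bar A$-$\bar B$-bimodule and a $\bar B$-$\bar A$-bimodule respectively; this is the $G$-indexed analogue of the opening matrix computation in the proof of Theorem \ref{pp}. Second, I would verify one-sided projectivity. Every graded projective left $A$-module is, as a graded module, a summand of a finite direct sum of shifted copies $A(g)$ for $g\in G$, and under the overlined construction these shifts correspond precisely to the columns $\bar A e_g$ of $\bar A$ (for the matrix idempotents $e_g$ on the diagonal). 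Hence any $_AM$ which is graded projective yields a projective $_{\bar A}\bar M$, and the right $\bar B$-projectivity of $\bar M$ and the two-sided projectivity properties of $\bar N$ follow by symmetry.

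Third, I would deduce the bimodule tensor product identities. Applying the overlined construction to the given graded bimodule isomorphisms $M\otimes_BN\simeq A\oplus P$ and $N\otimes_AM\simeq B\oplus Q$, with $P,Q$ graded projective bimodules, the compatibility display above yields
$$\bar M\otimes_{\bar B}\bar N\simeq\overline{M\otimes_BN}\simeq\bar A\oplus\bar P\qquad\text{and}\qquad\bar N\otimes_{\bar A}\bar M\simeq\bar B\oplus\bar Q$$
as bimodules. It remains to see that $\bar P$ and $\bar Q$ are projective as bimodules over $\bar A\otimes\bar A\opp$ and $\bar B\otimes\bar B\opp$ respectively; since a graded projective $A$-$A$-bimodule is a summand of a finite sum of shifts of $A\otimes_k A$, and since $\overline{A\otimes_k A}$ is a summand of $\bar A\otimes_k\bar A$ under the same $k$-parameterization, this reduces to the same explicit check already carried out in the proof of Theorem \ref{pp}.

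The only genuine obstacle is bookkeeping: one must fix a consistent convention for what $A(g)$ means and verify the compatibility of degree shifts with the matrix idempotents $e_g$, so that the displayed identification $(\bar X\otimes\bar Y)_{gh}\simeq\overline{X\otimes Y}_{gh}$ is functorial in $X$ and $Y$ and respects the $\bar A$- and $\bar B$-actions. Once these conventions are pinned down, each step is a routine $G$-indexed analogue of a calculation already carried out for the cyclic case in Theorem \ref{pp}, which is exactly why the author can describe the proof as ``somewhat routine''.
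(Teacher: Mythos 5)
Your proposal is correct and follows essentially the same route as the paper: you form $\bar M=(M_{gh^{-1}})$ and $\bar N=(N_{gh^{-1}})$, use the matrix--convolution compatibility $(\bar X\otimes\bar Y)_{gh}\simeq(X\otimes Y)_{gh^{-1}}$ to get $\bar M\otimes_{\bar B}\bar N\simeq\bar A\oplus\bar P$ and $\bar N\otimes_{\bar A}\bar M\simeq\bar B\oplus\bar Q$, and reduce the one-sided and bimodule projectivity of $\bar M,\bar N,\bar P,\bar Q$ to the explicit checks of Theorem \ref{pp}, exactly as the paper's (sketchier) proof does. If anything, your write-up supplies slightly more detail than the paper, e.g.\ identifying shifts with the columns $\bar Ae_g$ and noting that $\overline{A\otimes_kA}$ sits inside $\bar A\otimes_k\bar A$.
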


{\bf{Proof}}.  Suppose that $M$ and $N$ induce a $G$-graded stable equivalence of Morita type between $A$ and $B$, then 

$(1)$ $M$ is a left graded projective $A$-module and right graded projective $B$-module, and $N$ is a left graded projective $B$-module and right graded projective $A$-module.

$(2)$ $M\otimes_{B}N\simeq {A\oplus P}$ as graded $A$-$A$ bimodules for
some graded projective $A$-$A$-bimodule $P$, and $N\otimes_{A}M \simeq
{B\oplus Q}$ as graded $B$-$B$ bimodules for some graded projective  
$B$-$B$-bimodule $Q$. 
Then we can prove that $bar{M}$ is projective $\bar{A}$-module and right projective $\bar{B}$-module and $\bar{N}$ is a left projective $\bar{B}$-module and right projective $\bar{A}$-module. We have the following 
$$
\begin{array}{rl}
\bar{M}\otimes_{\bar{A}}\bar{N}=(M_{gh^{-1}})_{g,h\in G}\otimes(N_{uv^{-1}})_{u,v\in G}
=(\oplus_{k\in G}M_{ak^{-1}}\otimes N_{kb^{-1}})_{a,b\in G}\simeq (A_{ab^{-1}}\oplus P_{ab^{-1}})_{a,b\in G}\\
\simeq\bar{A}\oplus\bar{P}.
\end{array}
$$
Similarly, $\bar{N}\otimes_{\bar{B}}\bar{M}
\simeq\bar{B}\oplus\bar{Q}$.

Secondly, we can prove that $\bar{P}$ is a projective $\bar{A}$-$\bar{A}$-bimodule and $\bar{Q}$ is a projective $\bar{B}$-$\bar{B}$-bimodule.
Then $\bar{M}$ and $\bar{N}$ define a stable equivalence of Morita type between $\bar{A}$ and $\bar{B}$. $\square$

\section{An example\label{Example}}
In this section, we will give an example to illustrate our main result. 

Let $A$ be a self-injective algebra. Then it is well-known that $\Omega_{A\otimes A^{op}}(A)$ defines a stable equivalence of Morita type between $A$ and itself (see \cite{LX2}). 

The following corollary is a consequence of Theorem \ref{thm1}.

\begin{Koro}\label{7.1}
Let $A$ be finite-dimensional self-injective $k$-algebras over a field $k$ and let $X$ be an
$A$-module. Then, for
any finite admissible set $\Phi$ of natural numbers, there is a
stable equivalence of Morita type between $\scr G^{\Phi}_{A}(A\oplus X)$ and
$\scr G^{\Phi}_{A}(A\oplus \Omega_A(X))$.
\end{Koro}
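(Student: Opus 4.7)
The plan is to apply Theorem~\ref{thm1} directly with $B=A$. Set $N:=\Omega_{A\otimes_k A^{\mathrm{op}}}(A)\,\oplus\,(A\otimes_k A)$ and let $M$ be a corresponding inverse bimodule; by the fact recalled at the start of Section~\ref{Example}, together with \cite[Lemma~4.8]{LX2} (adjoining a projective bimodule summand to $N$ does not disturb the stable equivalence of Morita type), the pair $(M,N)$ defines a stable equivalence of Morita type of $A$ with itself. The module $Y:=A\oplus X$ is a generator of $A\modcat$, so Theorem~\ref{thm1} yields a stable equivalence of Morita type between $\scr G^{\Phi}_A(A\oplus X)$ and $\scr G^{\Phi}_A\bigl(N\otimes_A(A\oplus X)\bigr)$.

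It then suffices to identify $\scr G^{\Phi}_A\bigl(N\otimes_A(A\oplus X)\bigr)$ with $\scr G^{\Phi}_A(A\oplus\Omega_A(X))$ up to Morita equivalence. Decompose $N\otimes_A(A\oplus X)\cong N\oplus(N\otimes_A X)$. Starting from the short exact bimodule sequence $0\to\Omega_{A\otimes_k A^{\mathrm{op}}}(A)\to P\to A\to 0$ (with $P$ a projective $A$-$A$-bimodule), tensoring on the right over $A$ with $X$ remains exact because $N_A$ is projective, and produces $0\to\Omega_{A\otimes_k A^{\mathrm{op}}}(A)\otimes_A X\to P\otimes_A X\to X\to 0$ of left $A$-modules. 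Since $P\otimes_A X$ is a projective $A$-module, Schanuel's lemma gives $\Omega_{A\otimes_k A^{\mathrm{op}}}(A)\otimes_A X\cong\Omega_A(X)\oplus Q_X$ for some projective $Q_X$. The extra summand $(A\otimes_k A)\otimes_A X\cong A\otimes_k X$ is a free left $A$-module. Combined with $_AN$ being projective, this forces $\add\bigl(N\otimes_A(A\oplus X)\bigr)=\add(A\oplus\Omega_A(X))$. By Lemma~\ref{lem1}(1) one then has a Morita equivalence $\scr G^{\Phi}_A\bigl(N\otimes_A(A\oplus X)\bigr)\simeq\scr G^{\Phi}_A(A\oplus\Omega_A(X))$, and composing with the stable equivalence from Theorem~\ref{thm1} gives the claim.

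The main obstacle is the bookkeeping of projective summands: the syzygy bimodule $\Omega_{A\otimes_k A^{\mathrm{op}}}(A)$ need not contain every indecomposable projective summand of $_AA$ on the left, so without the projective-bimodule padding of $N$ one obtains only the inclusion $\add\bigl(N\otimes_A(A\oplus X)\bigr)\subseteq\add(A\oplus\Omega_A(X))$. Padding by the free bimodule $A\otimes_k A$ is harmless for the stable equivalence of Morita type and yields the required equality of additive closures, which converts the output of Theorem~\ref{thm1} into the desired statement.
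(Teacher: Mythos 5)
Your proposal is correct and, at its core, is the same argument the paper gives: apply Theorem \ref{thm1} with $B=A$, the syzygy bimodule $\Omega_{A\otimes_kA^{\rm op}}(A)$ and the generator $A\oplus X$, and then identify $\Omega_{A\otimes_kA^{\rm op}}(A)\otimes_AX$ with $\Omega_A(X)$ up to projective summands (Schanuel); the paper compresses exactly this into two lines, leaving the $\add$-bookkeeping and the appeal to Lemma \ref{lem1}(1) implicit, which you spell out. The one genuine difference is your padding of $N$ by the free bimodule $A\otimes_kA$, and the ``main obstacle'' it is designed to remove is not actually there: whenever bimodules $M,N$ define a stable equivalence of Morita type, the one-sided restrictions are automatically projective generators. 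Indeed, restricting $N\otimes_AM\simeq A\oplus Q$ to left modules and using $_AM\in\add({_AA})$ gives $_AA\in\add(N\otimes_AM)\subseteq\add({_AN})$, so already $\add\bigl(\Omega_{A\otimes_kA^{\rm op}}(A)\otimes_A(A\oplus X)\bigr)=\add\bigl(A\oplus\Omega_A(X)\bigr)$ with the unpadded syzygy bimodule, and the corollary follows as in your second paragraph without any modification of $N$. Your padded pair does still define a stable equivalence of Morita type (adding projective bimodule summands only contributes further projective bimodule summands to the products), so the detour is harmless, just unnecessary, and it costs you the extra (and slightly vague) step of producing ``a corresponding inverse bimodule.'' Two small touch-ups: the exactness of $0\to\Omega_{A\otimes_kA^{\rm op}}(A)\otimes_AX\to P\otimes_AX\to X\to 0$ is most cleanly justified by the fact that the defining sequence splits as right $A$-modules because $A_A$ is projective (rather than by projectivity of $N_A$); and in the Schanuel step you should note that $\Omega_A(X)$ has no projective summands (true since $A$ is self-injective), so that Krull--Schmidt yields the clean decomposition $\Omega_{A\otimes_kA^{\rm op}}(A)\otimes_AX\cong\Omega_A(X)\oplus Q_X$ with $Q_X$ projective.
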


\begin{proof} Note that $\Omega_{A\otimes A^{op}}(A)\otimes X\simeq X\oplus U$, where $U$ is a projective $A$-module. Then the result follows from Theorem \ref{thm1}.
\end{proof}

\begin{Rem} By Lemma \ref{lem5}, $\scr G^{\Phi}_{A}(A\oplus X)$ and
$\scr G^{\Phi}_{A}(A\oplus \Omega_A(X))$ are also derived equivalent.
\end{Rem}

Many important classes of algebras are of the form $\End_A(A\oplus
X)$ with $A$ a self-injective algebra. From the above corollary (see
also \cite[Corollary 3.14]{HAU}), we may get a series of algebras
which are stably equivalent of Morita type to Schur algebras. For
unexplained terminology in the next corollary, we refer the reader
to \cite{Gr}.

\begin{Koro} Suppose that $k$ is an algebraically closed field. Let $S_n$ be the symmetric group of degree $n$.
We denote by $Y$ the direct sum of all non-projective Young modules
over the group algebra $k[S_n]$ of $S_n$. Then,
 for  every finite admissible subset $\Phi$ of $\mathbb N$, the
algebras $\scr G^{\Phi}_{k[S_n]}(k[S_n]\oplus Y)$ and
$\scr G^{\Phi}_{k[S_n]}(k[S_n]\oplus \Omega^i(Y))$ are stably equivalent
of Morita type for all $i\in \mathbb Z$.
\end{Koro}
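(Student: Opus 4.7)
The plan is to read this as an iterated application of Corollary \ref{7.1}, using that $k[S_n]$ is a symmetric, hence self-injective, $k$-algebra. First I would note that Corollary \ref{7.1} with $A = k[S_n]$ and $X = Y$ immediately produces a stable equivalence of Morita type between $\scr G^{\Phi}_{k[S_n]}(k[S_n]\oplus Y)$ and $\scr G^{\Phi}_{k[S_n]}(k[S_n]\oplus \Omega(Y))$. Applying the same corollary with $X = \Omega(Y)$ then yields a stable equivalence of Morita type between $\scr G^{\Phi}_{k[S_n]}(k[S_n]\oplus \Omega(Y))$ and $\scr G^{\Phi}_{k[S_n]}(k[S_n]\oplus \Omega^{2}(Y))$, and so on.

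The second step is to observe that stable equivalences of Morita type are transitive: if $(M_1,N_1)$ defines a stable equivalence of Morita type between algebras $A$ and $B$, and $(M_2,N_2)$ defines one between $B$ and $C$, then $(M_1\otimes_B M_2,\, N_2\otimes_B N_1)$ defines a stable equivalence of Morita type between $A$ and $C$ (the bimodule conditions of Definition \ref{stm} are preserved under tensor products over $B$, and projectivity on one side is preserved because $M_2$ and $N_2$ are projective as one-sided modules). Composing the finitely many equivalences produced in the previous step therefore gives the assertion for every $i\ge 0$.

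For $i<0$, I would use two facts. First, stable equivalence of Morita type is a symmetric relation (the pair $(N,M)$ inverts $(M,N)$), so any equivalence obtained from Corollary \ref{7.1} may be read in either direction. Second, since $k[S_n]$ is self-injective, the syzygy functor $\Omega$ is an auto-equivalence of the stable category, so $\Omega^{-1}(Y)$ is a well-defined $k[S_n]$-module up to projective summands. Applying Corollary \ref{7.1} with $X = \Omega^{-1}(Y)$ gives a stable equivalence of Morita type between $\scr G^{\Phi}_{k[S_n]}(k[S_n]\oplus \Omega^{-1}(Y))$ and $\scr G^{\Phi}_{k[S_n]}(k[S_n]\oplus \Omega\Omega^{-1}(Y))$, and since $\Omega\Omega^{-1}(Y)\cong Y\oplus(\text{projective})$, the second algebra is Morita equivalent to $\scr G^{\Phi}_{k[S_n]}(k[S_n]\oplus Y)$ because $k[S_n]$ is already a direct summand of the generator. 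Iterating downward and composing then handles all negative $i$.

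The only subtle point, and the one I would write out carefully, is the ambiguity in defining $\Omega^i(Y)$ for negative $i$: one must check that adjoining extra projective summands to $Y$ does not change the Morita-equivalence class of $\scr G^{\Phi}_{k[S_n]}(k[S_n}\oplus Y)$, which follows since these projective summands already belong to $\add(k[S_n])$ so contribute no new indecomposable direct summand to the generator. Apart from this book-keeping, the proof is a direct composition of instances of Corollary \ref{7.1} and requires no new ingredients.
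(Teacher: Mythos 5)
Your proposal is correct and follows essentially the same route the paper intends: the corollary is derived by applying Corollary \ref{7.1} to the self-injective (indeed symmetric) algebra $k[S_n]$, iterating over syzygies and cosyzygies, and composing the resulting stable equivalences of Morita type, with the harmless book-keeping about projective summands (which is even simpler here since $Y$ has no projective summands, so minimal (co)syzygies give $\Omega\Omega^{-1}(Y)\cong Y$ on the nose).
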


\begin{Exam}

Let $A=k[x]/(x^n)$. Then $A$ is a representation-finite self-injective algebra.
Denote the indecomposable $A$-module by
$$X_r:=k[x]/(x^r)$$
 for $r=1,2,\cdots,n$.
Corollary \ref{7.1} thus shows that $\scr G^{\Phi}_{A}(A\oplus X_r)$ and $\scr G^{\Phi}_{A}(A\oplus X_{n-r})$ are stable equivalent of Morita type with $\Omega(X_r)=X_{n-r}$.

In the following, let $\Phi=\{0,1\}$ and $r=1$. 
Then we describe $\Phi$-Beilinson-Green algebras $\scr G^{\Phi}_{A}(A\oplus X_r)$ and $\scr G^{\Phi}_{A}(A\oplus X_{n-r})$ in terms of quivers with relations as follows
$$
\begin{array}{c}
 \xymatrix{  
 1 \ar@(dl,ul)[]^{\alpha} \ar@/^/[rr]| \beta & & 2 \ar@/^/[ll]|\gamma\ar[d]^{\delta}\\ 
  3 \ar@(dl,ul)[]^{\alpha'} \ar@/^/[rr]| {\beta'} & & 4 \ar@/^/[ll]|{\gamma'},  }\\
  \gamma\beta=\gamma\alpha=\alpha\beta=\beta\gamma-\alpha^{n-1}=\beta\delta=\delta\gamma'=0,\\
  \gamma'\beta'=\gamma'\alpha'=\alpha'\beta'=\beta'\gamma'-\alpha'^{n-1}=0.
  \end{array}
$$
and
$$
\begin{array}{c}

  \xymatrix{  
   1\ar@/^/[rr]| x & & 2 \ar@/^/[ll]| y\ar[d]^{\eta}\\ 
  3 \ar@/^/[rr]|{x'} & & 4 \ar@/^/[ll]| {y'}  } \\
 (xy)^{n-1}= (x'y')^{n-1}=x\eta=\eta y'=0.   
 \end{array}   
  $$

We calculate that $\dk(\scr G^{\Phi}_{A}(A\oplus X_1))=2n+7, \dk(\scr G^{\Phi}_{A}(A\oplus X_{n-1}))=4n-2$. 
Motivated by Zheng's method \cite[Property 5.1]{Zheng}, it is an interesting question to show that whether the  $\Phi$-Beilinson-Green algebras $\scr G^{\Phi}_{A}(A\oplus X_r)$ and $\scr G^{\Phi}_{A}(A\oplus X_{n-r})$ are cellular algebras. Note that cellular algebras were introduced by Graham and Lehrer \cite{GL} and the theory of cellular algebra related to the modular representations of Hecke algebras and various topics.
\end{Exam}

\bigskip

{\footnotesize
 }

\bigskip

\end{document}